\newtheorem{theorem}{Theorem}[section]
\newtheorem{proposition}[theorem]{Proposition}
\newtheorem{lemma}[theorem]{Lemma}
\newtheorem {corollary}[theorem]{Corollary}
\theoremstyle {definition}
\newtheorem {definition}[theorem]{Definition}
\newtheorem {example}[theorem]{Example}
\theoremstyle {remark}
\newtheorem{remark}[theorem]{Remark}
\def\Spec{\operatorname{Spec}}
\def\Ass{\operatorname{Ass}}
\def\Supp{\operatorname{Supp}}
\def\Psupp{\operatorname{Psupp}}
\def\Ann{\operatorname{Ann}}
\def\Ext{\operatorname{Ext}}
\def\Ker{\operatorname{Ker}}
\def\Coker{\operatorname{Coker}}
\newcommand{\fm}{\ensuremath{\mathfrak m}}
\newcommand{\fp}{\ensuremath{\mathfrak p}}
\newcommand{\bZ}{\ensuremath{\mathbb Z}}
\begin{document}

\title[On the length function of saturations of ideal powers]{On the length function of saturations of ideal powers}

\author{\fontencoding{T5}\selectfont \DJ o\`an Trung C\uhorn{}\`\ohorn ng} \address{\fontencoding{T5}\selectfont \DJ o\`an Trung C\uhorn{}\`\ohorn ng, Institute of Mathematics and the Graduate University of Science and Technology, Vietnam Academy of Science and Technology, 18 Hoang Quoc Viet, 10307 Hanoi, Viet Nam.} \email{dtcuong@math.ac.vn}

\author{\fontencoding{T5}\selectfont Ph\d am H\`\ocircumflex ng Nam} \address{\fontencoding{T5}\selectfont Ph\d am H\`\ocircumflex ng Nam, 
University of Sciences, Thai Nguyen University, Thai Nguyen, Vietnam.} \email{phamhongnam2106@gmail.com}

\author{\fontencoding{T5}\selectfont Ph\d am H\`ung Qu\'y} \address{\fontencoding{T5}\selectfont Ph\d am H\`ung Qu\'y, Department of Mathematics, FPT University, and Thang Long Institute of Mathematics and Applied sciences, Thang Long University, Hanoi, Vietnam.} \email{quyph@fe.edu.vn}


\subjclass[2010]{13H15, 13D40, 13D45}
\keywords{saturation of ideal powers, Hilbert polynomial of Artinian modules, Rees polynomial, almost p-standard system of parameters}

\begin{abstract}
For an ideal $I$ in a local ring $(R, \fm)$, we prove that the integer-valued function $\ell_R(H^0_\fm(R/I^{n+1}))$ is a polynomial for $n$ big enough if either $I$ is a principle ideal or $I$ is generated by part of an almost p-standard system of parameters. Furthermore, we are able to compute the coefficients of this polynomial in terms of length of certain local cohomology modules and usual multiplicity if either the ideal is principal or it is generated by part of a standard system of parameters in a generalized Cohen-Macaulay ring. We also give an example of an ideal generated by part of a system of parameters such that the function $\ell_R(H^0_\fm(R/I^{n+1}))$ is not a polynomial for $n\gg 0$.
\end{abstract}

\maketitle

\section{Introduction}

Let $(R, \fm)$ be a Noetherian local ring and $I\subset R$ be an ideal. There is a numerical function attached to $I$,
$$h^0_I: \bZ_{\geq 0}\rightarrow \bZ_{\geq 0}, n\mapsto \ell_R(H^0_\fm(R/I^{n+1})).$$
If $I$ is $\fm$-primary then $h^0_I(n)=\ell(R/I^{n+1})$ is the Hilbert-Samuel function. So it is of polynomial type, that means, there is a polynomial $H_I(n)$ such that $h^0_I(n)=H_I(n)$ for all $n\gg 0$. For a general ideal $I$, it is natural to ask whether the function $h^0_I(n)$ is of polynomial type. Unfortunately, it is not always the case. In \cite{UV} Ulrich and Validashti proved that the limit
$$\limsup_n d!\frac{h^0_I(n)}{n^d},$$
is a finite number. They showed that this limit can be considered as a generalization of the Buchsbaum-Rim multiplicity and called it the $\epsilon$-multiplicity. This numerical invariant $\epsilon$-multiplicity is denoted by $\epsilon(I)$ and is very useful in the theory of equisingularity (see \cite{KUV}). Nevertheless, Cutkosky, Ha, Srinivasan and Theodorescu \cite[Theorem 2.2]{CTST} gave an interesting example of a $4$-dimensional local ring $R$ and an ideal $I$ such that
$$\epsilon(I)=\lim_{n\to \infty}4!\frac{h^0_I(n)}{n^4},$$
is an irrational number. Thus in this example $h^0_I(n)$ is not of polynomial type. So the answer to the previous question is no in general.

In this short note, we address the question under which assumption the function $h^0_I(n)$ is of polynomial type, that is, there is a polynomial $P_I(n)$ such that $h^0_I(n)=P_I(n)$ for all $n\gg 0$. Note that $H^0_\fm(R/I^n)=\cup_{s>0}(I^n:_R\fm^s)/I^n$. As the main results, we will show that $h^0_I(n)$ is of polynomial type in the following cases:

\begin{enumerate}
\item[(a)] The ideal $I$ is principal (Theorem \ref{22});
\item[(b)] $R$ is unmixed and the ideal $I$ is generated by part of an almost p-standard system of parameters of $R$ (Theorem \ref{37}). 
\end{enumerate}

For the case (a), when $I$ is a principle ideal, the main idea of the proof is to relate the function $h^0_I(n)$ to the Hilbert functions of some Artinian modules arising from the first local cohomology module of $R$. In this case the leading coefficient of the corresponding polynomial is expressed precisely in terms of usual multiplicity and length of certain local cohomology modules.

In the case (b), we consider those ideals generated by part of an almost p-standard system of parameters. The later notion was introduced by the first two authors in \cite{DTCNam}. We will show that $h^0_I(n)=\ell_R(J^{n+1}/I^{n+1})$ for an ideal $J$ such that $I$ is a reduction of $J$ and $\ell_R(J/I)$ is finite. Then a theorem of Amao \cite{AMAO} concludes that there is a polynomial $P_I(n)$ such that $h^0_I(n)=P_I(n)$ for all $n\gg 0$. It should be remarked that $h^0_I(n)$ and $P_I(n)$ are also called the Rees function and Rees polynomial of the pair $(I, J)$ by Herzog-Puthenpurakal-Verma \cite{HPV}. If $I$ is generated by part of a standard system of parameters, we are able to give a precise formula for $P_I(n)$ (Theorem \ref{39}).

Relying on the example of Cutkosky, Ha, Srinivasan and Theodorescu \cite[Theorem 2.2]{CTST}, we give an example of an ideal $I$ generated by part of a system of parameters such that the function $h^0_I(n)$ is not of polynomial type (Example \ref{310}).

The article consists of three sections. We treat the case of principal ideals in Section 2. Ideals generated by part of a system of parameters are considered in Section 3.

Throughout this note, $(R, \fm)$ is always a Noetherian local ring.


\section{Length function of saturations of powers of a principal ideal}

In this section we study the existence of a saturated Hilbert polynomial of the ring $R$ with respect to a principal ideal. Before stating the main result of this section, we first recall several facts about Hilbert functions and multiplicities of Artinian modules from \cite{Kir, NTCNhan} which will be used in the subsequent section.

\begin{remark}\label{21} Let $A$ be an Artinian $R$-module and $I\subset R$ be a proper ideal. Suppose that $0:_AI$ is of finite length.
\begin{enumerate}
\item[(a)] Kirby \cite[Proposition 2]{Kir} proved that the module $0:_AI^{n+1}$ is also of finite length for any $n\geq 0$ and moreover, there is a polynomial $P_{A, I}(n)$ such that $P_{A, I}(n)=\ell_R(0:_AI^{n+1})$ for all $n\gg 0$. The degree of $P_{A, I}(n)$ is bounded above by $\mu(I)$.
\item[(b)] (See \cite{NTCNhan}) The multiplicity $e^\prime(I; A)$ of the Artinian module $A$ with respect to $I$ is defined such that $d!e^\prime(I;A)$ is the leading coefficient of the polynomial $P_{A, I}(n)$, here $d$ is the degree of $P_{A, I}(n)$. Given a short exact sequence of Artinian modules
$$0\rightarrow A_1\rightarrow A\rightarrow A_2\rightarrow 0,$$
we have $e^\prime(I, A)=\delta_{dd_1}e^\prime(I, A_1)+\delta_{dd_2}e^\prime(I, A_2)$, where $d_i$ is the degree of the polynomial $P_{I, A_i}(n)$ and $\delta_{dd_i}$ is the Kronecker delta, $i=1, 2$.
\item[(c)] Assume that $R$ is a homomorphic image of a Cohen-Macaulay local ring. Equivalently, as shown in \cite{Kaw} (see also \cite{NTCDTC2}), $R$ is universally catenary and all its formal fibers are Cohen-Macaulay. In \cite{BS1} Brodmann and Sharp defined the $i$-th pseudo-support of $R$ to be the set
$$\Psupp^i(R)=\{\fp\in \Spec(R): H^{i-\dim R/\fp}_{\fp R_\fp}(R_\fp)\not=0\}.$$
In particular, $\Psupp^1(R)=\{\fm\}\cup\{\fp:
\dim R/\fp=1 \text{ and }H^0_{\fp R_\fp}(R_\fp)\not=0\}$. Put $\mathrm{psd}^i(R)=\max\{\dim R/\fp: \fp\in \Psupp^i(R)\}$. They then showed that (see \cite[Theorem 2.4]{BS1})
$$e^\prime(I; H^i_\fm(R))=\sum_{\substack{\fp\in\Psupp^i(R)\\ \dim R/\fp=\mathrm{psd}^i(R)}}\ell_{R_\fp}(H^{i-\dim R/\fp}_{\fp R_\fp}(R_\fp))e(I, R/\fp).$$
\end{enumerate}
\end{remark}

The main result of this section is the following theorem

\begin{theorem}\label{22}
Let $I=aR$ be a principal ideal of $R$. There is a polynomial $P_I(n)$ with $\deg P_I(n)\leq 1$ such that $P_I(n)=h^0_I(n)$ for all $n\gg 0$.
\end{theorem}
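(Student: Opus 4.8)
The plan is to reduce the whole computation to the behavior of a single Artinian module, namely $H^1_\fm(R)$, and then apply Remark \ref{21}(a) with $\mu(I)=1$. If $a$ is a unit the statement is trivial, since then $R/I^{n+1}=0$, so assume $a\in\fm$. First suppose $a$ is a nonzerodivisor on $R$. Then $\depth R\geq 1$, so $H^0_\fm(R)=0$, and the short exact sequence $0\to R\xrightarrow{a^{n+1}}R\to R/a^{n+1}R\to 0$ induces in local cohomology an isomorphism
$$H^0_\fm(R/a^{n+1}R)\cong 0:_{H^1_\fm(R)}a^{n+1}.$$
Set $A=H^1_\fm(R)$, which is Artinian. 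The case $n=0$ shows $0:_A a\cong H^0_\fm(R/aR)$, which is of finite length. Hence the hypothesis of Remark \ref{21}(a) is satisfied, and it produces a polynomial $P_{A,aR}(n)$ of degree at most $\mu(aR)=1$ with $P_{A,aR}(n)=\ell_R(0:_A a^{n+1})=h^0_I(n)$ for all $n\gg 0$.

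For general $a$, I would pass to $\bar R=R/N$, where $N:=H^0_{aR}(R)=\bigcup_s(0:_R a^s)=0:_R a^t$ for some $t$. One checks directly that the image of $a$ is a nonzerodivisor on $\bar R$, so the previous paragraph applies to $\bar R$. It then remains to compare $h^0_I(n)$ with its analogue over $\bar R$. For $n\geq t$ one has $0:_R a^{n+1}=N$, whence $a^{n+1}R\cong\bar R$, and the sequence $0\to a^{n+1}R\to R\to R/a^{n+1}R\to 0$ yields
$$0\to H^0_\fm(R)\to H^0_\fm(R/a^{n+1}R)\to\ker\psi_{n,*}\to 0,$$
where $\psi_{n,*}\colon H^1_\fm(\bar R)\to H^1_\fm(R)$ is induced by the map $\psi_n\colon\bar R\to R$, $\bar r\mapsto a^{n+1}r$.

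The key observation is the factorization $\psi_n=\psi_0\circ(\cdot a^n)$, which gives $\psi_{n,*}=\psi_{0,*}\circ(\cdot a^n)$ and therefore $\ker\psi_{n,*}=(\cdot a^n)^{-1}(L)$ with $L:=\ker\psi_{0,*}$. Analyzing the endomorphism $\cdot a^n$ of $H^1_\fm(\bar R)$ on this preimage yields
$$\ell_R(\ker\psi_{n,*})=\ell_R\big(0:_{H^1_\fm(\bar R)}a^n\big)+\ell_R\big(a^nH^1_\fm(\bar R)\cap L\big).$$
Now $L\cong H^0_\fm(R/aR)/H^0_\fm(R)$ has finite length, so the submodules $a^nH^1_\fm(\bar R)\cap L$ form a descending chain inside a module of finite length and their lengths are eventually constant, while the first term is a polynomial of degree at most $1$ by the nonzerodivisor case applied to $\bar R$. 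Adding the fixed constant $\ell_R(H^0_\fm(R))$ then exhibits $h^0_I(n)$ as a polynomial of degree at most $1$ for $n\gg 0$.

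I expect the main obstacle to be precisely the general case: identifying the correct Artinian module after killing $H^0_{aR}(R)$, and above all showing that the discrepancy between the length functions over $R$ and over $\bar R$ stabilizes rather than merely staying bounded. The factorization $\psi_n=\psi_0\circ(\cdot a^n)$ together with the finite length of $L$ is what makes the descending-chain argument go through; by contrast, the nonzerodivisor case is a routine consequence of the long exact sequence combined with Remark \ref{21}(a).
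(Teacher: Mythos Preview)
Your strategy coincides with the paper's: pass to $\bar R=R/(0:_Ra^t)$ where $a$ becomes regular, identify $h^0_I$ with $\ell(H^0_\fm(R))$ plus the length of a kernel in $H^1_\fm(\bar R)$, factor the relevant map through multiplication by a power of $a$, and finish with Remark~\ref{21}(a). The nonzerodivisor case is handled correctly.

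There is, however, a genuine slip in the general case. Your base map $\psi_0\colon\bar R\to R$, $\bar r\mapsto ar$, is well-defined only when $aN=0$, i.e.\ when $0:_Ra^t\subseteq 0:_Ra$, which forces $0:_Ra=0:_Ra^t$ and hence $t=1$. In general this fails, so neither the factorization $\psi_n=\psi_0\circ(\cdot\,a^n)$ nor the identification $L\cong H^0_\fm(R/aR)/H^0_\fm(R)$ is available. The remedy is simply to shift the base: for $n\geq t-1$ the map $\psi_{t-1}\colon\bar r\mapsto a^{t}r$ \emph{is} well-defined (since $a^tN=0$), one has $\psi_n=\psi_{t-1}\circ(\cdot\,a^{n-t+1})$, and $L:=\ker\psi_{t-1,*}$ sits in the exact sequence for $n=t-1$, giving $L\cong H^0_\fm(R/a^{t}R)/H^0_\fm(R)$ of finite length. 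With this correction your argument goes through; this is exactly the indexing the paper uses (its $\psi_t$ is your $\psi_{t-1}$).

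A minor stylistic remark: once the base is fixed, the paper computes $\ell(\ker\psi_{n,*})=\ell(L)+\ell\big(0:_{H^1_\fm(\bar R)/L}a^{n+1}\big)$ and applies Remark~\ref{21}(a) directly to the Artinian module $H^1_\fm(\bar R)/L$. Your alternative decomposition $\ell(\ker\psi_{n,*})=\ell(0:_{H^1_\fm(\bar R)}a^{n})+\ell(a^{n}H^1_\fm(\bar R)\cap L)$ is also valid but needs the extra (easy) observation that the second summand stabilizes; the paper's route avoids this step.
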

\begin{proof}
As $R$ is Noetherian, there is a number $t>0$ such that $0:_Ra^t=0:_Ra^{n+t+1}$ for any $n\geq 0$. The short exact sequence
$$0\longrightarrow R/0:_Ra^t\xrightarrow{\ *a^{n+t+1}\ } R\longrightarrow R/a^{n+t+1}R\longrightarrow 0,$$
leads to an exact sequence of local cohomology modules
\begin{multline*}
0\longrightarrow H^0_\fm(R/0:_Ra^t)\xrightarrow{\ *a^{n+t+1}\ } H^0_\fm(R)\longrightarrow H^0_\fm(R/a^{n+t+1}R)\\
\longrightarrow H^1_\fm(R/0:_Ra^t)\xrightarrow{\psi_{n+t+1}} H^1_\fm(R)\longrightarrow \ldots.
\end{multline*}
Here $H^0_\fm(R/0:_Ra^t)=0$ as $a$ is a regular element on $R/0:_Ra^t$. The map $\psi_{n+t+1}$ is derived from the multiplication by $a^{n+t+1}$ on $R$. We  get
$$h^0_I(n+t)=\ell(\Ker \psi_{n+t+1})+\ell_R(H^0_\fm(R)).$$
From the commutative triangle\medskip

\centerline{
\xymatrix@1{
R/0:_Ra^t \ \ \ar[rr]^{*a^{n+t+1}} \ar[dr]_{*a^{n+1}} && \ R \\
&R/0:_Ra^t\ar[ur]_{*a^t}
}}
\bigskip

\noindent there induces a commutative triangle of local cohomology modules \medskip

\centerline{
\xymatrix@1{
H^1_\fm(R/0:_Ra^t) \ \ \ar[rr]^{\psi_{n+t+1}} \ar[dr]_{*a^{n+1}} && \ H^1_\fm(R). \\
&H^1_\fm(R/0:_Ra^t)\ar[ur]_{\psi_t}
}}
\bigskip

\noindent Hence $\Ker(\psi_{n+t+1})=\Ker(\psi_t):_Aa^{n+1}$, where $A=H^1_\fm(R/0:_Ra^t)$ is an Artinian module (see, for example, \cite[Theorem 7.1.3]{BS}). Let $\bar A=A/\Ker(\psi_t)$. We obtain
$$h^0_I(n+t)=\ell(0:_{\bar A}a^{n+1})+\ell(\Ker \psi_t)+\ell_R(H^0_\fm(R)),$$
for any $n\geq 0$. Since the module $\bar A$ is Artinian, the conclusion is implied from Remark \ref{21}(a).
\end{proof}

Let $I$ be a principal ideal and let $P_I(n)$ be the polynomial in Theorem \ref{22} so that $P_I(n)=h^0_I(n)$ for all $n\gg 0$. We have seen in the proof of Theorem \ref{22}, the polynomial $P_I(n)$ is closely related to a Hilbert polynomial of a first local cohomology module, thus, an Artinian module. Since $\deg P_I(n)\leq 1$, if the dimension of $R$ is bigger than $1$ then
$$\lim_nd!\frac{P_I(n)}{n^d}=0.$$
So the $\epsilon$-multiplicity  of $R$ with respect to a principal ideal is zero.

In the next, we will investigate more on the coefficients of the polynomials $P_I(n)$. We write 
$$P_I(n)=ne^{sat}_0(a; R)+e_1^{sat}(a; R),$$
where $I=aR$ and the coefficients $e^{sat}_0(a; R), e^{sat}_1(a; R)$ are integers. We will show that the leading coefficient $e_0^{sat}(a; R)$ could be expressed explicitly by means of usual multiplicities and length of certain local cohomology modules. We need first a lemma.

\begin{lemma}\label{23}
Denote $\Ass(R)_1=\{\fp\in\Ass(R): \dim R/\fp=1\}$. Then
$$\Ass(R)_1=\Psupp^1(R)\setminus\{\fm\}.$$
\end{lemma}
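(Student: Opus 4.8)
The plan is to reduce both sides to a common description phrased in terms of the local cohomology module $H^0_{\fp R_\fp}(R_\fp)$, and then to translate that vanishing condition into membership in $\Ass(R)$.

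First I would unwind the right-hand side directly from the definition of the pseudo-support. For a prime $\fp\neq\fm$ we have $\dim R/\fp\geq 1$, while $H^{1-\dim R/\fp}_{\fp R_\fp}(R_\fp)$ can only be nonzero when $1-\dim R/\fp\geq 0$, that is when $\dim R/\fp\leq 1$. Hence every $\fp\in\Psupp^1(R)\setminus\{\fm\}$ automatically has $\dim R/\fp=1$, and the defining condition collapses to $H^0_{\fp R_\fp}(R_\fp)\neq 0$. This is exactly the ``in particular'' description already recorded in Remark \ref{21}(c), so I would obtain
$$\Psupp^1(R)\setminus\{\fm\}=\{\fp\in\Spec(R): \dim R/\fp=1 \text{ and } H^0_{\fp R_\fp}(R_\fp)\neq 0\}.$$

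The heart of the argument is then the observation that, for any prime $\fp$, the condition $H^0_{\fp R_\fp}(R_\fp)\neq 0$ is equivalent to $\depth R_\fp=0$, i.e.\ to $\fp R_\fp\in\Ass(R_\fp)$; and by the standard behaviour of associated primes under localization, $\fp R_\fp\in\Ass(R_\fp)$ holds precisely when $\fp\in\Ass(R)$. Restricting to primes with $\dim R/\fp=1$ therefore identifies the displayed set with $\{\fp\in\Ass(R):\dim R/\fp=1\}=\Ass(R)_1$, which is the asserted equality.

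Concretely I would check the two inclusions separately: for $\subseteq$, take $\fp\in\Ass(R)_1$, localize at $\fp$ to get $\fp R_\fp\in\Ass(R_\fp)$ and hence $H^0_{\fp R_\fp}(R_\fp)\neq 0$, while $\dim R/\fp=1$ forces $\fp\neq\fm$; for $\supseteq$, reverse each implication. I do not anticipate a serious obstacle, since every step is a standard fact from commutative algebra. The only point needing minor care is the preliminary reduction above, which shows that membership in $\Psupp^1(R)\setminus\{\fm\}$ already pins down $\dim R/\fp=1$, thereby forcing the cohomological degree $1-\dim R/\fp$ to equal $0$ and making the translation to $\Ass(R)$ possible.
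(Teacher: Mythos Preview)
Your proposal is correct and follows essentially the same approach as the paper: both arguments reduce to the equivalence $H^0_{\fp R_\fp}(R_\fp)\neq 0 \Leftrightarrow \fp R_\fp\in\Ass(R_\fp)\Leftrightarrow \fp\in\Ass(R)$ for primes $\fp$ with $\dim R/\fp=1$. You supply a bit more detail than the paper (in particular, you spell out why membership in $\Psupp^1(R)\setminus\{\fm\}$ forces $\dim R/\fp=1$), but the substance is identical.
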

\begin{proof}
Let $\fp$ be a prime ideal of dimension $1$. Then $H^0_{\fp R_\fp}(R_\fp)\not=0$ if and only if $\fp R_\fp$ is an associated prime ideal of $R_\fp$. This turns out to be equivalent to that $\fp$ is an associated prime ideal of $R$.
\end{proof}

From now on, we will need to assume that $R$ is a quotient of a Cohen-Macaulay local ring.

\begin{theorem}\label{24}
Let $R$ be a quotient of a Cohen-Macaulay Noetherian local ring. Let $I=aR$ be a principal ideal of $R$. We have
$$e^{sat}_0(a; R)=\sum_{\fp\in\Ass(R)_1\setminus V(I)}\ell_{R_\fp}(H^0_{\fp R_\fp}(R_\fp))e(a; R/\fp).$$
\end{theorem}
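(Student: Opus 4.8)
The plan is to continue from the proof of Theorem~\ref{22} and reduce the computation of the leading coefficient to the Brodmann--Sharp formula of Remark~\ref{21}(c), applied not to $R$ but to the quotient $R'=R/(0:_Ra^t)$ for $t\gg 0$. Recall from that proof that, with $A=H^1_\fm(R')$ and $\bar A=A/\Ker\psi_t$,
\[
h^0_I(n+t)=\ell(0:_{\bar A}a^{n+1})+\ell(\Ker\psi_t)+\ell_R(H^0_\fm(R)).
\]
Replacing $n$ by $n+t$ and adding a constant does not change the coefficient of $n$, so $e^{sat}_0(a;R)$ is the coefficient of $n$ in $\ell(0:_{\bar A}a^{n+1})$. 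I would first check that $\Ker\psi_t$ has finite length: the map $a^t\colon R'\to R$ is injective with cokernel $R/a^tR$, and since $a$ is a nonzerodivisor on $R'$ we have $H^0_\fm(R')=0$, so the long exact cohomology sequence identifies $\Ker\psi_t$ with a quotient of the finite-length module $H^0_\fm(R/a^tR)$. Consequently $\ell(0:_{\bar A}a^{n+1})$ and $\ell(0:_Aa^{n+1})$ differ by a bounded amount, hence share the coefficient of $n$, and therefore $e^{sat}_0(a;R)$ equals the coefficient of $n$ in $\ell(0:_Aa^{n+1})$; this equals $e'(aR;H^1_\fm(R'))$ when that polynomial has degree $1$, and equals $0$ when it has degree $0$.

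Next I would verify the hypotheses of Remark~\ref{21}(c) for $A=H^1_\fm(R')$ and $I=aR$. The ring $R'$ is again a quotient of a Cohen--Macaulay local ring; and from the sequence $0\to R'\xrightarrow{a}R'\to R'/aR'\to 0$ (using that $a$ is regular on $R'$) one gets $0:_Aa\cong H^0_\fm(R'/aR')$, which has finite length. Remark~\ref{21}(c) then yields
\[
e'(aR;H^1_\fm(R'))=\sum_{\substack{\fp'\in\Psupp^1(R')\\ \dim R'/\fp'=\mathrm{psd}^1(R')}}\ell_{R'_{\fp'}}\bigl(H^0_{\fp'R'_{\fp'}}(R'_{\fp'})\bigr)\,e(aR;R'/\fp').
\]

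It remains to translate the right-hand side into data over $R$. Since $0:_Ra^t=\bigcup_s(0:_Ra^s)$ is the $a$-torsion of $R$, one has $\Ass(R')=\Ass(R)\setminus V(I)$; thus the one-dimensional primes $\fp'$ of $R'$ correspond bijectively, via $\fp'\leftrightarrow\fp$ with $R'/\fp'=R/\fp$, to the primes of $\Ass(R)_1\setminus V(I)$, which by Lemma~\ref{23} applied to $R'$ are exactly the dimension-$1$ elements of $\Psupp^1(R')$. When $\Ass(R)_1\setminus V(I)\neq\varnothing$ we have $\mathrm{psd}^1(R')=1$, so the sum runs over these $\fp$; and for each such $\fp$ the element $a$ is a unit in $R_\fp$, so $(0:_Ra^t)_\fp=0$ and $R'_{\fp'}=R_\fp$. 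Hence $\ell_{R'_{\fp'}}(H^0_{\fp'R'_{\fp'}}(R'_{\fp'}))=\ell_{R_\fp}(H^0_{\fp R_\fp}(R_\fp))$ and $e(aR;R'/\fp')=e(a;R/\fp)$, and substituting gives the claimed formula; if instead $\Ass(R)_1\setminus V(I)=\varnothing$ then $\mathrm{psd}^1(R')\le 0$, the relevant polynomial has degree $0$, so $e^{sat}_0(a;R)=0$ and the sum is empty, giving the identity trivially. The main obstacle is precisely the reduction in the first paragraph: one cannot apply Remark~\ref{21}(c) to $R$ directly, because for $\fp\in\Ass(R)_1\cap V(I)$ the multiplicity $e(a;R/\fp)$ is undefined and $0:_{H^1_\fm(R)}a$ need not have finite length; passing to $R'=R/(0:_Ra^t)$ is engineered to discard exactly these primes while leaving the coefficient of $n$ unchanged.
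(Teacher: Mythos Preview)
Your proof is correct and follows essentially the same route as the paper's: reduce $e^{sat}_0(a;R)$ to $e'(a;H^1_\fm(R'))$ using that $\Ker\psi_t$ has finite length, apply the Brodmann--Sharp formula from Remark~\ref{21}(c) to $R'=R/(0:_Ra^t)$, and then identify the index set $\Psupp^1(R')\setminus\{\fm\}$ with $\Ass(R)_1\setminus V(I)$. The only cosmetic difference is that the paper carries out the last identification by comparing $\Psupp^1(R')$ with $\Psupp^1(R)$ and then invoking Lemma~\ref{23} for $R$, whereas you compute $\Ass(R')=\Ass(R)\setminus V(I)$ directly and invoke Lemma~\ref{23} for $R'$; both are equivalent.
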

\begin{proof}
Using the notations in the proof of Theorem \ref{22}, from the equality
$$h^0_I(n+t)=\ell(0:_{\bar A}a^{n+1})+\ell(\Ker \psi_t)+\ell_R(H^0_\fm(R)),$$
where $A=H^1_\fm(R/0:_Ra^t)$ and $\bar A=A/\Ker(\psi_t)$, we get that $$e^{sat}_0(a; R)=e^\prime (a; \bar A)=e^\prime(a; H^1_\fm(R/0:_Ra^t)/\Ker(\psi_t)).$$
Since the module $\Ker(\psi_t)$ is of finite length as it is the image of the connecting map $H^0_\fm(R/a^tR)\rightarrow H^1_\fm(R/0:_Ra^t)$, Remark \ref{21}(b) gives us
$$e^{sat}_0(a; R)=e^\prime (a; H_\fm ^1(R/0:_Ra^t)).$$
Using Brodmann-Sharp's associativity formula for multiplicities of local cohomology modules in Remark \ref{21}(c) we have
\begin{displaymath}
e^\prime (a, H_\fm ^1(R/0:_Ra^t))=
\sum_{\substack{\fp \in \Psupp^1(R/0:_Ra^t) \\ \fp \not=\fm}}
\ell_{R_\fp }(H_{\fp R_\fp }^0(R_\fp/(0:_Ra^t)_{\fp }))e(a; R/\fp ).
\end{displaymath}

Now the inclusion $R/0:_Ra^t\stackrel{*a^t}{\longrightarrow} R$ leads to an inclusion
$\Psupp^1(R/0:_Ra^t)\subseteq \Psupp^1(R)$. Take a prime ideal $\fp \in \Psupp^1(R)$ with $\dim R/\fp =1$. If $a\not\in\fp$ then $(0:_Ra^t)_\fp=0$ and $R_\fp\simeq (R/0:_Ra^t)_\fp$. So $H_{\fp R_\fp }^0(R_\fp)\simeq H_{\fp R_\fp }^0((R/0:_Ra^t)_\fp)$. This implies that
$$\Psupp^1(R)\setminus V(I)=\Psupp^1(R/0:_Ra^t)\setminus V(I).$$
On the other hand, if $a\in \fp$ then $a$ is a regular element on $R/0:_Ra^t$, hence $\frac{a}{1}$ is a regular element on $(R/0:_Ra^t)_\fp$. This shows particularly that $H_{\fp R_\fp }^0((R/0:_Ra^t)_\fp)=0$ and thus
$$\Psupp^1(R/0:_Ra^t)\setminus V(I)=\Psupp^1(R/0:_Ra^t)\setminus \{\fm\}.$$
Therefore we get

\begin{displaymath}
\begin{split}
e^{sat}_0(a; R)
&= \sum_{\substack{\fp \in \Psupp^1(R/0:_Ra^t) \\ \fp \not=\fm}}
\ell_{R_\fp }(H_{\fp R_\fp }^0(R_\fp/(0:_Ra^t)_{\fp }))e(a; R/\fp )\\
&= \sum_{\substack{\fp \in \Psupp^1(R/0:_Ra^t) \\ \fp \not\in V(I)}}
\ell_{R_\fp }(H_{\fp R_\fp }^0(R_\fp/(0:_Ra^t)_{\fp }))e(a; R/\fp )\\
&=\sum_{\fp \in \Psupp^1(R)\setminus V(I)}
\ell_{R_\fp }(H_{\fp R_\fp }^0(R_{\fp }))e(a; R/\fp)\\
&=\sum_{\fp \in \Ass(R)_1\setminus V(I)}
\ell_{R_\fp }(H_{\fp R_\fp }^0(R_{\fp }))e(a; R/\fp),
\end{split}
\end{displaymath}
as required, here the last equality follows from Lemma \ref{23}.
\end{proof}

A direct consequence of Theorem \ref{24} is that the saturated Hilbert polynomial $P_I(n)$ is of degree one if and only if $\Ass(R)_1\setminus V(I)\not=\emptyset$, or equivalently, $a$ is in some associated prime ideal of dimension one. In the next, for a given ring $R$ and let $a$ vary, we consider the two extremal cases: $\Ass(R)_1\setminus V(I)=\emptyset$ and $\Ass(R)_1\setminus V(I)=\Ass(R)_1$.

\begin{corollary}\label{25}
Let $a\in \fm$ and let $t>0$ such that $0:_Ra^{n+t+1}=0:_Ra^t$ for all $n\geq 0$.
\begin{enumerate}[(i)]
\item If $a$ is a filter-regular element of $R$ then
$$e_0^{sat}(a; R)=e^\prime(a; H^1_\fm(R)).$$
The right hand side is the multiplicity of a local cohomology module defined in Remark \ref{21}(b).

\item The element $a$ is in the radical of $\Ann_RH^1(R)$ if and only if $e_0^{sat}(a; R)=0$. In that case,
$$P_I(n)=\ell(H^1_\fm(R/0:_Ra^t))+\ell(H^0_\fm(R)),$$
is a constant polynomial.
\end{enumerate}
\end{corollary}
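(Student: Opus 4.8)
The plan is to treat the two parts separately, handling (i) by comparing $H^1_\fm(R)$ with $H^1_\fm(R/0:_Ra^t)$ directly, and reducing both conditions in (ii) to a single statement about associated primes. For part (i), I would first note that filter-regularity of $a$ means $a\notin\fp$ for every $\fp\in\Ass(R)\setminus\{\fm\}$. Since any associated prime of the submodule $0:_Ra^t\subseteq R$ must contain $a$, this forces $\Ass(0:_Ra^t)\subseteq\{\fm\}$, so $L:=0:_Ra^t$ has finite length. Feeding the short exact sequence $0\to L\to R\to R/L\to 0$ into local cohomology and using $H^1_\fm(L)=H^2_\fm(L)=0$, I obtain an $R$-linear isomorphism $H^1_\fm(R)\cong H^1_\fm(R/0:_Ra^t)$. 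This isomorphism commutes with multiplication by $a$, so the two Artinian modules share the same polynomial of Remark \ref{21}(a) and hence the same multiplicity $e^\prime(a;-)$. Combined with the identity $e^{sat}_0(a;R)=e^\prime(a;H^1_\fm(R/0:_Ra^t))$ already obtained in the proof of Theorem \ref{24}, this yields (i).

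For the equivalence in (ii), the strategy is to show that both sides are equivalent to the inclusion $\Ass(R)_1\subseteq V(I)$. By Theorem \ref{24}, $e^{sat}_0(a;R)$ is a sum of terms indexed by $\fp\in\Ass(R)_1\setminus V(I)$, and each such term is strictly positive: the factor $\ell_{R_\fp}(H^0_{\fp R_\fp}(R_\fp))$ is nonzero because $\fp R_\fp\in\Ass(R_\fp)$, while $e(a;R/\fp)>0$ because $a$ is a parameter of the one-dimensional ring $R/\fp$. Hence $e^{sat}_0(a;R)=0$ exactly when $\Ass(R)_1\setminus V(I)=\emptyset$. For the other side, I would use the Brodmann-Sharp description of the pseudo-support as the variety of the annihilator, $\Psupp^1(R)=V(\Ann_RH^1_\fm(R))$, to rewrite $a\in\Rad(\Ann_RH^1_\fm(R))$ as $\Psupp^1(R)\subseteq V(I)$; since $\fm\in V(I)$ automatically, Lemma \ref{23} collapses this to $\Ass(R)_1\subseteq V(I)$. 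Matching the two descriptions gives the asserted equivalence.

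It remains to identify the constant value when $e^{sat}_0(a;R)=0$. Here I would return to the formula $h^0_I(n+t)=\ell(0:_{\bar A}a^{n+1})+\ell(\Ker\psi_t)+\ell_R(H^0_\fm(R))$ from the proof of Theorem \ref{22}, with $\bar A=H^1_\fm(R/0:_Ra^t)/\Ker(\psi_t)$. Vanishing of $e^{sat}_0(a;R)=e^\prime(a;\bar A)$ means the polynomial $\ell(0:_{\bar A}a^{n+1})$ has degree zero; passing to the completion and applying Matlis duality writes this length as $\ell(N/a^{n+1}N)$ for a finitely generated $N$ with $\Ann N=\Ann\bar A$, and boundedness of $\ell(N/a^{n+1}N)$ forces $\dim N=0$, i.e. $\bar A$ is of finite length with $a^k\bar A=0$ for some $k$. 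Consequently $0:_{\bar A}a^{n+1}=\bar A$ for $n\gg 0$, and since $\ell(\bar A)=\ell_R(H^1_\fm(R/0:_Ra^t))-\ell(\Ker\psi_t)$, the two $\ell(\Ker\psi_t)$ terms cancel and $P_I(n)=\ell_R(H^1_\fm(R/0:_Ra^t))+\ell_R(H^0_\fm(R))$, a constant. I expect the last step to be the main obstacle: one must rule out the possibility that $e^\prime(a;\bar A)=0$ merely because the increasing chain $0:_{\bar A}a^{n+1}$ stabilizes at a proper $a$-torsion submodule, which would replace $\ell(\bar A)$ by a smaller length; the Matlis-duality computation is exactly what certifies that $\bar A$ itself is annihilated by a power of $a$.
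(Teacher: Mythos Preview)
Your argument is correct, but in two places it diverges from the paper's route.

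For part (i) the paper does not compare $H^1_\fm(R)$ with $H^1_\fm(R/0:_Ra^t)$ at all; instead it reads off $e_0^{sat}(a;R)$ from the explicit formula of Theorem~\ref{24}, observes that filter-regularity gives $\Ass(R)_1\setminus V(I)=\Ass(R)_1$, and then recognizes the resulting sum as $e'(a;H^1_\fm(R))$ via the Brodmann--Sharp associativity formula of Remark~\ref{21}(c). Your approach is more direct and, notably, does not use the associativity formula (hence does not really need the quotient-of-Cohen--Macaulay hypothesis for this part), whereas the paper's route keeps everything inside the multiplicity formalism already set up. For the equivalence in (ii) the two arguments are essentially the same reduction to $\Ass(R)_1\subseteq V(I)$; the paper handles the implication ``$a\in\Rad\Ann H^1_\fm(R)\Rightarrow\Ass(R)_1\subseteq V(I)$'' by a hands-on exact-sequence argument with $R/\fp\hookrightarrow R$, while you invoke the identification $V(\Ann_RH^1_\fm(R))=\Psupp^1(R)$ in one stroke.

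Your treatment of the constant value of $P_I(n)$ is also correct, and in fact the paper's proof is silent on this point. However, the Matlis-duality detour is heavier than necessary. Once the equivalence is established you may freely use the hypothesis $a\in\Rad\Ann H^1_\fm(R)$: since $\bar A\cong\Im(\psi_t)\subseteq H^1_\fm(R)$, some power $a^k$ annihilates $\bar A$ outright, so $0:_{\bar A}a^{n+1}=\bar A$ for $n\ge k-1$, and finiteness of $\ell(\bar A)$ follows from Remark~\ref{21}(a). This replaces your duality computation and directly dispels the worry you flag about the chain stabilizing at a proper submodule.
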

\begin{proof}
\item(i) Since $a$ is a filter-regular element of $R$, it is obvious that $0:_{H^1_\fm(R)}a$ is of finite length, thus the multiplicity $e^\prime(a; H^1_\fm(R))$ is determined by Remark \ref{21}(b).

On the other hand, being a filter-regular element of $R$ means that $a\not\in\fp$ for any associated prime ideal $\fp\not=\fm$. So $\Ass(R)_1\setminus V(I)=\Ass(R)_1$. From Theorem \ref{24} and the associativity formula in Remark \ref{21}(c) we obtain the equality $e_0^{sat}(a; R)=e^\prime(a; H^1_\fm(R))$.

\item(ii) By Theorem \ref{24}, the leading coefficient $e_0^{sat}(a; R)$ vanishes if and only if $\Ass(R)_1\subseteq V(I)$. For the necessary condition, we take an associated prime ideal $\fp\in \Ass(R)_1$. There is an inclusion $\psi: R/\fp\hookrightarrow R$ which give an exact sequence
$$H^0_\fm(\Coker \psi)\rightarrow H^1_\fm(R/\fp)\rightarrow H^1_\fm(R).$$
Since $a$ is in the radical of $\Ann_RH^1_\fm(R)$, the exact sequence implies that $a^sH^1(R/\fp)=0$ for some $s\gg0$. This implies in particular that $a\in \fp$ since $\dim R/\fp=1$. Hence $\Ass(R)_1\subseteq V(I)$.

Conversely, suppose $\Ass(R)_1\subseteq V(I)$. Since $R$ is a homomorphic image of a Cohen-Macaulay local ring, by \cite[Proposition 2.5]{BS1}, we have $$V(\Ann_RH^1_\fm(R))=\bigcup_{\substack{\fp\in \Ass(R)\\ \dim R/\fp\leq 1}} V(\fp)\subseteq V(I).$$
So $I\subseteq \sqrt{\Ann_RH^1_\fm(R)}$.
\end{proof}

If  $R$ is a homomorphic image of a Gorenstein local ring $(R^\prime, \fm^\prime)$, then the coefficient $e_0^{sat}(a; R)$ could be computed as a multiplicity of the first module of deficiency $K^1(R)=\Ext^{d^\prime-1}_{R^\prime}(R, R^\prime)$, where $d^\prime=\dim R^\prime$. The module of deficiency $K^1(R)$ is a finitely generated $R$-module which is dual to $H^1_\fm(R)$ via the Local Duality Theorem \cite[11.2.6]{BS}.

\begin{corollary}\label{26}
Let $(R, \fm)$ be a homomorphic image of a Gorenstein local ring. Let $I=aR$ be a principal ideal. We have
$$e_0^{sat}(a; R)=e(a; K^1(R)),$$
where the multiplicity on the right hand side is defined by
$$e(a; K^1(R))=\sum_{\substack{\fp\in\Ass K^1(R)\setminus V(I)\\ \dim R/\fp=1}}\ell_{R_\fp }(K^1(R)_\fp)e(a; R/\fp).$$
\end{corollary}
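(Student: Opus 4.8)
The plan is to derive the formula directly from Theorem~\ref{24} by comparing the two sums term by term; the only genuinely new ingredient is a local identification of the deficiency module $K^1(R)$ at a prime of dimension one. Recall from Theorem~\ref{24} that
$$e^{sat}_0(a; R)=\sum_{\fp\in\Ass(R)_1\setminus V(I)}\ell_{R_\fp}(H^0_{\fp R_\fp}(R_\fp))\,e(a; R/\fp),$$
so it suffices to match the index set $\Ass(R)_1\setminus V(I)$ with $\{\fp\in\Ass K^1(R):\dim R/\fp=1\}\setminus V(I)$ and to identify the coefficients $\ell_{R_\fp}(H^0_{\fp R_\fp}(R_\fp))$ with $\ell_{R_\fp}(K^1(R)_\fp)$.

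First I would analyze $K^1(R)$ under localization. Write $R=R'/J$ with $(R',\fm')$ Gorenstein local of dimension $d'$, so that $K^1(R)=\Ext^{d'-1}_{R'}(R, R')$. Fix $\fp\in\Spec R$ with preimage $\fp'\in\Spec R'$; then $R'_{\fp'}$ is again Gorenstein, of dimension $d'-\dim R/\fp$, and since $\Ext$ commutes with localization we have $K^1(R)_\fp=\Ext^{d'-1}_{R'_{\fp'}}(R_\fp, R'_{\fp'})$. Comparing this with the deficiency modules of $R_\fp$ computed over the Gorenstein cover $R'_{\fp'}$ gives the dimension-shift identity $K^1(R)_\fp\cong K^{1-\dim R/\fp}(R_\fp)$, which is the standard localization formula for deficiency modules. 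In particular, when $\dim R/\fp=1$ we obtain $K^1(R)_\fp\cong K^0(R_\fp)$, and by Local Duality over the local ring $R_\fp$ the module $K^0(R_\fp)$ is the Matlis dual of $H^0_{\fp R_\fp}(R_\fp)$.

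Next I would extract the two consequences needed. Since $H^0_{\fp R_\fp}(R_\fp)$ has finite length over $R_\fp$, Matlis duality yields $\ell_{R_\fp}(K^1(R)_\fp)=\ell_{R_\fp}(H^0_{\fp R_\fp}(R_\fp))$, and in particular $K^1(R)_\fp\neq 0$ exactly when $H^0_{\fp R_\fp}(R_\fp)\neq 0$, i.e.\ exactly when $\fp\in\Ass(R)$. When $K^1(R)_\fp$ is a nonzero finite-length $R_\fp$-module, $\fp R_\fp$ is associated to it, so $\fp\in\Ass K^1(R)$; conversely $\fp\in\Ass K^1(R)$ forces $K^1(R)_\fp\neq 0$. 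Hence, among primes with $\dim R/\fp=1$, the condition $\fp\in\Ass K^1(R)$ is equivalent to $\fp\in\Ass(R)$, so the two index sets coincide after removing $V(I)$. Substituting the length identity into the formula of Theorem~\ref{24} then turns each summand $\ell_{R_\fp}(H^0_{\fp R_\fp}(R_\fp))\,e(a; R/\fp)$ into $\ell_{R_\fp}(K^1(R)_\fp)\,e(a; R/\fp)$, which is precisely the $\fp$-term defining $e(a; K^1(R))$, and summing gives $e_0^{sat}(a; R)=e(a; K^1(R))$.

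The one technical point to nail down is the localization/shift identity $K^1(R)_\fp\cong K^0(R_\fp)$ for $\dim R/\fp=1$, together with the compatibility of the Gorenstein cover $R'$ with the passage to $R'_{\fp'}$; once this duality statement is in place, the rest of the argument is purely formal bookkeeping built on Theorem~\ref{24} and Matlis duality.
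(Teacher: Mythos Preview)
Your argument is correct and follows essentially the same route as the paper: both start from Theorem~\ref{24} and identify each summand via the localization/shift formula $K^1(R)_\fp\cong K^0(R_\fp)$ together with Local Duality, then match the index sets. The paper simply compresses your two middle paragraphs into a citation of \cite[Proposition~1.2]{BS1} for the equality $\Psupp^1(R)=\Supp K^1(R)$ and the length identity, and observes that since $\dim\Supp K^1(R)\le 1$ the dimension-one primes in $\Supp K^1(R)$ are minimal, hence associated, which gives the index-set match in one line.
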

\begin{proof}
Following \cite[Proposition 1.2]{BS1}, we have $\Psupp^1(M)=\Supp K^1(R)$ being a closed subset of $\Spec R$ of dimension at most $1$. Moreover, for a prime ideal $\fp\in \Psupp^1(R)_1=\Supp K^1(R)_1=\Ass K^1(R)_1$, we have
$\ell_{R_\fp}(H^0_{\fp R_\fp}(R_\fp))=\ell_{R_\fp}(K^1(R)_\fp)$. The conclusion is therefore a consequence of Theorem \ref{24}.
\end{proof}


\section{Ideals generated by part of a system of parameters}

In this section we consider the ideals $I$ generated by part of a system of parameters and study whether the function $h^0_I(n)$ is of polynomial type. In the first part of the section, we restrict ourself to the case of almost p-standard systems of parameters which are defined in \cite{DTCNam}.

We recall the definition of almost p-standard system of parameters.

\begin{definition}\cite[Definition 2.1]{DTCNam}\label{31}
Let $M$ be a finitely generated $R$-module of dimension $d$. A system of parameters $x_1, \ldots, x_d$ of $M$ is a called an almost p-standard system of parameters if there are given integers $\lambda_0, \ldots, \lambda_d$ such that
$$\ell_R(M/(x_1^{n_1}, \ldots, x_d^{n_d})M)=\sum_{i=0}^dn_1\ldots n_i\lambda_i,$$
for all $n_1, \ldots, n_d>0$.
\end{definition}

Recall that a standard system of parameters satisfies
$$\ell_R(M/(x_1^{n_1}, \ldots, x_d^{n_d})M)=n_1\ldots n_de(x_1, \ldots, x_d;M)+I(M),$$
for all $n_1, \ldots, n_d>0$, where $I(M)$ is an invariant of $M$. Almost p-standard systems of parameters are generalization of standard systems of parameters for the modules which are not generalized Cohen-Macaulay. Similar to the standard systems of parameters, the almost p-standard property could be characterized by means of the notion of d-sequence.


\begin{proposition}\cite[Corollary 3.6]{NTCDTC1}\label{32}
Let $x_1, \ldots, x_d$ be a system of parameters of a finitely generated module $M$. The following statements are equivalent.
\begin{enumerate}[(a)]
\item $x_1, \ldots, x_d$ is an almost p-standard system of parameters;
\item $x_1^{n_1}, \ldots, x_i^{n_i}$ is a d-sequence on $M/(x_{i+1}^{n_{i+1}}, \ldots, x_d^{n_d})M$ for $i=1, \ldots, d$ and for all $n_1, \ldots, n_d>0$.
\end{enumerate}
\end{proposition}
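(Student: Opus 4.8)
The plan is to prove both implications simultaneously by induction on $d=\dim M$, the key step being to pass from $M$ to the $(d-1)$-dimensional quotient $M/x_d^{n_d}M$. Observe first the tautological identity, valid for each fixed $n_d$,
$$M/(x_1^{n_1},\ldots,x_d^{n_d})M=(M/x_d^{n_d}M)\big/(x_1^{n_1},\ldots,x_{d-1}^{n_{d-1}})(M/x_d^{n_d}M),$$
and note that the quotient $M/(x_{i+1}^{n_{i+1}},\ldots,x_d^{n_d})M$ occurring in condition (b) for the full system, with $i\leq d-1$, is literally the quotient of $M/x_d^{n_d}M$ by $(x_{i+1}^{n_{i+1}},\ldots,x_{d-1}^{n_{d-1}})$. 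Hence condition (b) for $x_1,\ldots,x_d$ on $M$, restricted to the indices $i=1,\ldots,d-1$, is exactly condition (b) for the subsystem $x_1,\ldots,x_{d-1}$ on $M/x_d^{n_d}M$; this compatibility is what makes the induction go.

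For (b)$\Rightarrow$(a), the restricted conditions give, by the induction hypothesis applied in dimension $d-1$, that $x_1,\ldots,x_{d-1}$ is almost p-standard on $M/x_d^{n_d}M$, say with coefficients $\mu_0(n_d),\ldots,\mu_{d-1}(n_d)$. Substituting into the displayed identity expresses $\ell_R(M/(x_1^{n_1},\ldots,x_d^{n_d})M)$ as $\sum_{i=0}^{d-1}n_1\cdots n_i\,\mu_i(n_d)$, so it remains to show $\mu_i(n_d)$ is constant for $i\leq d-2$ and affine-linear, $\mu_{d-1}(n_d)=\lambda_{d-1}+n_d\lambda_d$, for $i=d-1$. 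This is where the top-level part of (b) (the case $i=d$, that $x_1^{n_1},\ldots,x_d^{n_d}$ is itself a d-sequence on $M$) enters: it controls the colon modules $0:_M x_d^{n_d}$ and the way $H^\bullet_\fm(M/x_d^{n_d}M)$ varies with $n_d$, and I would invoke the reduction lemmas for d-sequences (a d-sequence remains one on the quotient by its first element and on the corresponding colon module) to identify each $\mu_i(n_d)$ with a length of a local cohomology module whose dependence on $n_d$ is of the asserted linear type. The base case $d=1$ is the one-dimensional length formula $\ell_R(M/x_1^{n_1}M)=\ell_R(0:_M x_1^{n_1})+n_1 e(x_1;M)$, in which the single d-sequence condition $0:_M x_1=0:_M x_1^2$ makes the first summand constant, yielding $\lambda_0+n_1\lambda_1$.

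For (a)$\Rightarrow$(b), fixing $n_d$ and regarding $\sum_{i=0}^d n_1\cdots n_i\lambda_i$ as a function of $n_1,\ldots,n_{d-1}$, it has prefix-product form with coefficients $\lambda_0,\ldots,\lambda_{d-2},\lambda_{d-1}+n_d\lambda_d$, so $x_1,\ldots,x_{d-1}$ is almost p-standard on $M/x_d^{n_d}M$; the induction hypothesis then supplies the conditions (b) for $i\leq d-1$. The only statement not produced by the descent is the top condition $i=d$, that $x_1^{n_1},\ldots,x_d^{n_d}$ is a d-sequence on $M$, and I would extract this from the numerical identity by comparing the length with the lengths of the relevant colon modules: the failure of any equality $(x_1^{n_1},\ldots,x_{j-1}^{n_{j-1}})M:_M x_j x_k=(x_1^{n_1},\ldots,x_{j-1}^{n_{j-1}})M:_M x_k$ would force a strictly larger value of $\ell_R(M/(x_1^{n_1},\ldots,x_d^{n_d})M)$ than the prefix-product polynomial predicts.

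The main obstacle is precisely this interface between numerical and module-theoretic data at the top level, together with the bookkeeping of how the dimension-$(d-1)$ coefficients $\mu_i(n_d)$ depend on $n_d$. In both directions the crux is to show that the submodules $(x_2^{n_2},\ldots,x_d^{n_d})M:_M x_1$ and their higher-level analogues vary multilinearly in the exponents; the d-sequence conditions in (b) are exactly the hypotheses that linearise this variation, and matching that linear variation against the finite differences of the prefix-product polynomial in (a) is the heart of the equivalence.
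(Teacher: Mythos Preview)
The paper does not give its own proof of this proposition: it is quoted verbatim as \cite[Corollary 3.6]{NTCDTC1} and used as a black box. So there is no in-paper argument to compare your proposal against; the proof lives in the reference, where it is obtained as a consequence of the theory of dd-sequences and partial Euler--Poincar\'e characteristics of Koszul complexes developed there.

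As for your proposal itself, the inductive scaffolding (pass to $M/x_d^{n_d}M$ and read off conditions for $i\le d-1$) is sound, and your base case $d=1$ is correct in both directions. The genuine gaps are exactly the two places you flag. In the direction (b)$\Rightarrow$(a), asserting that the coefficients $\mu_i(n_d)$ are constant for $i\le d-2$ and affine for $i=d-1$ is the whole content of the step, and ``invoking reduction lemmas for d-sequences'' does not establish it; what is actually needed is an identification of each $\mu_i(n_d)$ with a specific multiplicity of a colon module (of the type $e(x_1,\ldots,x_i;(0:x_{i+1})_{M/(x_{i+2},\ldots,x_d^{n_d})M})$, as in \cite[Corollary~4.4]{DTCNam} or \cite{NTCDTC1}), together with a proof that these particular modules are independent of the exponent $n_d$ under the d-sequence hypotheses. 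In the direction (a)$\Rightarrow$(b), the claim that failure of a single colon equality forces $\ell_R(M/(x_1^{n_1},\ldots,x_d^{n_d})M)$ to be strictly larger than the prefix-product polynomial is not justified and is not obviously true: the length is not a priori monotone in the colon modules in the way you suggest, and one needs instead the characterization of the $\lambda_i$ as partial Euler characteristics (which are nonnegative and vanish exactly under the d-sequence condition) to close this step. Without importing those identifications from \cite{NTCDTC1} or \cite{DTCNam}, the outline does not become a proof.
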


Recall that a sequence $x_1, \ldots, x_s$ is a d-sequence on $M$ if $(x_1, \ldots, x_{i-1})M:x_ix_j=(x_1, \ldots, x_{i-1})M:x_j$ for all $1\leq i\leq j\leq s$. It is a strong d-sequence if $x_1^{n_1}, \ldots, x_s^{n_s}$ is a d-sequence on $M$ for any $n_1, \ldots, n_s>0$.

A local ring has an almost p-standard system of parameters if and only if it is a quotient of a Cohen-Macaulay local ring (see \cite{NTCDTC2}). Therefore most of local rings in commutative algebra have an almost p-standard system of parameters. 

In the next we have some technical results.

\begin{proposition}\label{33}
Let $x_1, \ldots, x_d$ be an almost p-standard system of parameters of a finitely generated $R$-module $M$. For $0\leq i<j\leq d$, denote $I=(x_{i+1}, \ldots, x_j)$. The sequence $x_1, \ldots, x_i, x_{j+1}^2, \ldots, x_d^2$ is an almost p-standard system of parameters of $M/I^nM$ for all $n>0$.
\end{proposition}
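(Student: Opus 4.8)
The plan is to establish the almost p-standard property through its $d$-sequence characterization (Proposition \ref{32}) rather than by computing the length function directly. Write $e=d-(j-i)=\dim M/I^nM$ and denote the proposed system of parameters of $N:=M/I^nM$ by $w_1,\dots,w_e$, where $w_\ell=x_\ell$ for $1\le \ell\le i$ and $w_{i+k}=x_{j+k}^2$ for $1\le k\le d-j$. That $w_1,\dots,w_e$ is a genuine system of parameters of $N$ follows from a dimension count, since $(x_1,\dots,x_d)$ and $(I,x_1,\dots,x_i,x_{j+1},\dots,x_d)$ have the same radical. By Proposition \ref{32} it then suffices to prove that for every $0\le k\le e$ and all positive integers $m_1,\dots,m_e$, the sequence $w_1^{m_1},\dots,w_k^{m_k}$ is a $d$-sequence on
$$N_k:=M/\bigl(I^n+(w_{k+1}^{m_{k+1}},\dots,w_e^{m_e})\bigr)M;$$
that is, writing $y_\ell=w_\ell^{m_\ell}$, that
$$(y_1,\dots,y_{\ell-1})N_k:_{N_k}y_\ell y_m=(y_1,\dots,y_{\ell-1})N_k:_{N_k}y_m$$
for all $1\le \ell\le m\le k$.

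The common complication, present in every case, is that we are dividing by the block power $I^n=(x_{i+1},\dots,x_j)^n$ rather than by individual powers of the parameters, which is what Proposition \ref{32} addresses directly. I would separate the verification according to whether the leading index $\ell$ lies in the \emph{front block} $\{1,\dots,i\}$ or the squared \emph{tail} $\{i+1,\dots,e\}$. For $\ell\le i$ the variables generating $I^n$ (and the squared tail) all come \emph{after} $x_\ell$, so they enter only through the ambient relations of $N_k$; here I expect the required colon equalities to reduce, via the stability of the almost p-standard property under replacing parameters by powers (immediate from Proposition \ref{32}), together with a control of $I^nM$, to the $d$-sequence relations already known for $x_1,\dots,x_d$. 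The genuinely delicate case is $\ell>i$, where the leading parameter is a squared tail variable $x_{j+k}^2$ and the block ideal $I^n$ now sits \emph{inside} the ``earlier'' ideal of the colon relation, so that the interaction occurs precisely across the interface between the block $I$ and the tail $x_{j+1},\dots,x_d$.

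The technical heart is therefore a lemma controlling the colon modules $I^nM:_M u$ for monomials $u$ in the tail variables. Using that $x_{i+1},\dots,x_j$ forms a $d$-sequence, one has Huneke-type descriptions of the powers $I^nM$ and of their colons, which I would combine with the original almost p-standard relations to rewrite $N_k$ in a form where the $d$-sequence property of $x_{j+k}^2$ becomes visible. The role of squaring appears exactly here: on $M/I^nM$ the unsquared element $x_{j+1}$ can fail the colon relation because $I^nM:_M x_{j+1}$ may strictly exceed $I^nM+(0:_M x_{j+1})$, a finite-length discrepancy permitted by the non–Cohen–Macaulay setting (in the Cohen–Macaulay case no squaring is needed, as a direct length computation shows), whereas passing to $x_{j+1}^2$ absorbs this discrepancy and restores the clean colon equality.

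I expect this last step --- the simultaneous control of $I^nM$, of its colons against the tail variables, and of the interplay with the front-block relations at the boundary index $j$ --- to be the main obstacle; the front-block indices and the system-of-parameters verification are routine by comparison. A workable route is to induct on $n$ through the short exact sequences
$$0\longrightarrow I^nM/I^{n+1}M\longrightarrow M/I^{n+1}M\longrightarrow M/I^nM\longrightarrow 0,$$
exploiting that for the $d$-sequence $x_{i+1},\dots,x_j$ the graded pieces $I^nM/I^{n+1}M$ are governed by $M/IM$ and by the same almost p-standard data, so that the colon computations for general $n$ follow from the base case $n=1$ handled by the defining property of almost p-standard systems.
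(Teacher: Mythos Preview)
Your approach is genuinely different from the paper's, and as written it has a real gap. The paper does \emph{not} verify Proposition~\ref{32} for the sequence $w_1,\dots,w_e$; instead it verifies Definition~\ref{31} directly by computing the length function. Concretely, for arbitrary exponents $n_1,\dots,n_i,n_{j+1},\dots,n_d$, setting $M_1=M/(x_1^{n_1},\dots,x_i^{n_i},x_{j+1}^{n_{j+1}},\dots,x_d^{n_d})M$, the sequence $x_{i+1},\dots,x_j$ is again almost p-standard on $M_1$, so Trung's formula \cite[Theorem~4.1]{NVT1} gives $\ell(M_1/I^{n+1}M_1)$ explicitly as $\sum_t f_{j-i-t}(I;M_1)\binom{n+t}{t}$. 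Each coefficient $f_{j-i-t}$ is identified with a length of a zeroth local cohomology module via \cite[Corollary~4.4]{DTCNam}, and the key point---the only place squaring enters---is that \cite[Proposition~3.2]{DTCNam} forces these lengths to be independent of $n_{j+1},\dots,n_d$ once each $n_s\ge 2$. Writing the top coefficient $f_0$ by Lech and substituting $n_s\mapsto 2n_s$ for $s>j$ then exhibits $\ell\bigl((M/I^{n+1}M)/(x_1^{n_1},\dots,x_i^{n_i},x_{j+1}^{2n_{j+1}},\dots,x_d^{2n_d})(M/I^{n+1}M)\bigr)$ as the required polynomial in $n_1,\dots,n_i,n_{j+1},\dots,n_d$.

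Your plan, by contrast, defers everything to a ``lemma controlling the colon modules $I^nM:_M u$'' that you do not prove. This is not a minor omission: in the paper, that colon computation is exactly Lemma~\ref{35}, and its proof \emph{uses} Proposition~\ref{33} (to know that $x_1,\dots,x_i,x_{i+1},x_{j+1}^2,\dots,x_d^2$ is almost p-standard on $R/(x_t,J^n)$). So invoking such a colon lemma to establish Proposition~\ref{33} risks circularity, and the independent proof you sketch (induction on $n$ via $I^nM/I^{n+1}M$) is only a hope, not an argument. Moreover, your claim that the front-block case $\ell\le i$ is ``routine'' understates the difficulty: the quotient $N_k$ involves $I^n$ rather than individual powers $x_{i+1}^{p_{i+1}},\dots,x_j^{p_j}$, so Proposition~\ref{32} does not apply directly, and you again need control of colons against $I^n$---the very lemma you have not supplied. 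The paper's route avoids all of this by never touching colon ideals, trading them for length formulas that are already available in the literature.
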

\begin{proof} Let $n_1, \ldots, n_d$ be positive integers. By \cite{NTCDTC1}, $x_{i+1}, \ldots, x_j$ is an almost p-standard system of parameters of $M_1=M/(x_1^{n_1}, \ldots, x_i^{n_i}, x_{j+1}^{n_{j+1}}, \ldots, x_d^{n_d})M$, in particular, it is a strong d-sequence on $M_1$. Due to \cite[Theorem 4.1]{NVT1}, we have
$$\ell(M_1/I^{n+1}M_1)=\sum_{t=0}^{j-i}f_{j-i-t}(I; M_1)\binom{n+t}{t},$$
where $f_{j-i}(I; M_1)=h^0(M_1)$ and for $t>0$,
\[\begin{aligned}
f_{j-i-t}(I; M_1)=&h^0(M/(x_1^{n_1}, \ldots, x_i^{n_i}, x_{i+1}, \ldots, x_{i+t+1}, x_{j+1}^{n_{j+1}}, \ldots, x_d^{n_d})M)\\
&-h^0(M/(x_1^{n_1}, \ldots, x_i^{n_i}, x_{i+1}, \ldots, x_{i+t}, x_{j+1}^{n_{j+1}}, \ldots, x_d^{n_d})M).
\end{aligned}\]
Denote $M_2=M/(x_{j+1}^{n_{j+1}}, \ldots, x_d^{n_d})M$. The length of the zero-th local cohomology module is computed in \cite[Corollary 4.4]{DTCNam} and we have
\[\begin{aligned}
h^0(M/(x_1^{n_1}, \ldots, &x_i^{n_i}, x_{i+1}, \ldots, x_{i+t}, x_{j+1}^{n_{j+1}}, \ldots, x_d^{n_d})M)\\
=&h^0(M_2/((x_1^{n_1}, \ldots, x_i^{n_i}, x_{i+1}, \ldots, x_{i+t})M_2)\\
=&\sum_{v=0}^{i}n_1\ldots n_ve(x_1, \ldots, x_v; (0:x_{v+1})_{M_2/(x_{v+2}, \ldots, x_{i+t})M_2})\\
&+n_1\ldots n_i\sum_{v=i+1}^{i+t}e(x_1, \ldots, x_v; (0:x_{v+1})_{M_2/(x_{v+2}, \ldots, x_{i+t})M_2}),
\end{aligned}\]
which does not depend on $n_{j+1}, \ldots, n_d\geq 2$ by \cite[Proposition 3.2]{DTCNam}.

On the other hand, using Lech's theorem and the definition of almost p-standard system of parameters \ref{31}, we have
\[\begin{aligned}
f_0(I; M_1)
=&\lim_{m \to \infty}\ \ \frac{1}{m^{j-i}}\ell(M_1/(x_{i+1}^m, \ldots, x_j^m)M_1)\\
=&\lim_{m \to \infty}\ \ \frac{1}{m^{j-i}}\ell(M/(x_1^{n_1}, \ldots, x_i^{n_i}, x_{i+1}^m, \ldots, x_j^m, x_{j+1}^{n_{j+1}}, \ldots, x_d^{n_d})M)\\
=&n_1\ldots n_i\sum_{v=j}^dn_{j+1}\ldots n_ve(x_1, \ldots, x_v; (0:x_{v+1})_{M/(x_{v+2}, \ldots, x_d)M}).
\end{aligned}\]
Replacing $n_{j+1}, \ldots, n_d$ by $2n_{j+1}, \ldots, 2n_d$, there are integers $\lambda_0, \ldots, \lambda_i$, $\lambda_{j+1}, \ldots, \lambda_d$ such that
\[\begin{aligned}
\ell\left(\frac{M/I^{n+1}M}{(x_1^{n_1}, \ldots, x_i^{n_i}, x_{j+1}^{n_{j+1}}, \ldots, x_d^{n_d})M/I^{n+1}M}\right)
=&\sum_{v=0}^in_1\ldots n_v\lambda_v\\
&+ n_1\ldots n_i\sum_{v=j+1}^dn_{j+1}\ldots n_v\lambda_v,\end{aligned}\]
for all $n_1, \ldots, n_d\geq 1$. Therefore, $x_1, \ldots, x_i, x_{j+1}^2, \ldots, x_d^2$ is an almost p-standard system of parameters of $M/I^{n+1}M$ for all $n\geq 0$.
\end{proof}

If $R$ is a generalized Cohen-Macaulay module then almost p-standard systems of parameters are the same as standard systems of parameters. Particularly they are unconditioned strong d-sequences. We obtain an immediate consequence of Proposition \ref{33} for generalized Cohen-Macaulay rings (compare \cite[Theorem 1.2]{LT}).

\begin{corollary}\label{34}
Let $R$ be a generalized Cohen-Macaulay local ring with a standard system of parameters $x_1, \ldots, x_d$. Let $n>0$. For some $0<i<d$, put $I=(x_1, \ldots, x_i)$. Then $R/I^n$ is a generalized Cohen-Macaulay ring and $x_{i+1}, \ldots, x_d$ is a standard system of parameters of $R/I^n$.

In particular, if $R$ is a Cohen-Macaulay local ring then $R/I^n$ is a Cohen-Macaulay ring and $h^0_I(n)=0$.
\end{corollary}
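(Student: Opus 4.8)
The plan is to read Corollary~\ref{34} off Proposition~\ref{33} and then to remove the squares that the latter introduces. First I would specialize Proposition~\ref{33} to $M=R$ and to the index pair $(i,j)=(0,i)$, so that the ideal $(x_{0+1},\ldots,x_i)=(x_1,\ldots,x_i)$ matches the ideal $I$ in the statement; the proposition then yields that $x_{i+1}^2,\ldots,x_d^2$ is an almost p-standard system of parameters of $R/I^n$ for every $n>0$, a module of dimension $d-i>0$. This is the mechanical step and is immediate.

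Next I would show that $R/I^n$ is generalized Cohen--Macaulay by proving that the length function attached to this (squared) system of parameters has the standard shape, namely only a top multiplicity term and a constant. Specializing the final length formula in the proof of Proposition~\ref{33} to $(i,j)=(0,i)$, the coefficients multiplying the intermediate products $n_{i+1}\cdots n_v$ with $i<v<d$ are, up to positive constants, the multiplicities $e(x_1,\ldots,x_v;(0:x_{v+1})_{R/(x_{v+2},\ldots,x_d)R})$. Here is where the hypothesis enters: since $R$ is generalized Cohen--Macaulay, so is each $R/(x_{v+2},\ldots,x_d)R$, and in a generalized Cohen--Macaulay module every parameter is filter-regular, so the torsion module $(0:x_{v+1})_{R/(x_{v+2},\ldots,x_d)R}$ has finite length. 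A multiplicity with respect to $v\ge 1$ elements on a finite-length module vanishes, so all these intermediate coefficients are zero; only the top term and the constant $I(R/I^n)$ survive. Thus $x_{i+1}^2,\ldots,x_d^2$ satisfies the defining length formula of a standard system of parameters, whence $R/I^n$ is generalized Cohen--Macaulay.

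It remains to pass from the squared system to the first powers $x_{i+1},\ldots,x_d$, and this is the step I expect to be the main obstacle. One cannot simply ``de-square'': being standard is genuinely sensitive to powers (already in dimension one, $z^2H^0_\fm=0$ does not force $zH^0_\fm=0$), so the squaring in Proposition~\ref{33} cannot be discarded by a formal argument valid for an arbitrary system of parameters. What rescues the statement is that $x_{i+1},\ldots,x_d$ is not arbitrary but is part of the original standard system of parameters of $R$, hence an unconditioned strong d-sequence already to first power. I would therefore argue, via the d-sequence characterization of Proposition~\ref{32} together with the behaviour of strong d-sequences modulo powers of the leading ideal (as in \cite{NVT1,LT}), that the images of $x_{i+1},\ldots,x_d$ form an unconditioned strong d-sequence on $R/I^n$; being a system of parameters of the now-established generalized Cohen--Macaulay ring $R/I^n$, they are then a standard system of parameters.

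Finally, for the Cohen--Macaulay case the sequence $x_1,\ldots,x_i$ is a regular sequence, so the associated graded ring $\mathrm{gr}_I(R)\cong (R/I)[T_1,\ldots,T_i]$ is Cohen--Macaulay and hence so is $R/I^n$; equivalently, all lower local cohomology of $R$ vanishes and the surviving constant $I(R/I^n)$ is zero. Since $\dim R/I^n=d-i>0$, this gives $H^0_\fm(R/I^n)=0$ and therefore $h^0_I(n)=0$.
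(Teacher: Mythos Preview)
Your proposal is correct and follows the same route the paper indicates. The paper itself gives no argument beyond the sentence ``We obtain an immediate consequence of Proposition~\ref{33} for generalized Cohen--Macaulay rings (compare \cite[Theorem~1.2]{LT})'', so your write-up is in fact considerably more detailed than what the authors supply.

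Two remarks. First, your step~2 can be shortened: reading the final length formula in the proof of Proposition~\ref{33} with $(i,j)=(0,i)$, the intermediate coefficients attached to $n_{i+1}\cdots n_v$ for $i<v<d$ are, up to the factor $2^{v-i}$, exactly the invariants $e(x_1,\ldots,x_v;(0:x_{v+1})_{R/(x_{v+2},\ldots,x_d)R})$, which are precisely the intermediate $\lambda_v$ of the \emph{original} system $x_1,\ldots,x_d$ on $R$. These vanish by the hypothesis that the system is standard on $R$, so there is no need to invoke filter-regularity or finite-length arguments separately. Second, you are right that the de-squaring in step~3 is the only nontrivial point and cannot be removed by a purely formal manoeuvre; the paper handles this the same way you do, by pointing to the d-sequence/standard-parameter literature (specifically \cite{LT}) rather than arguing it out.
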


Before stating the main theorem of this section, we need one more lemma which is a key in the proof of the theorem.

\begin{lemma}\label{35}
Let $x_1,\ldots, x_d$ be an almost p-standard system of parameters of $R$. For $0\leq i<j\leq d$, denote $I=(x_{i+1}, \ldots, x_j)$. Let $t\in \{1,\ldots, i\}\cup\{j+1,\ldots, d\}$. Then
$$I^{n+1}:_Rx_t=I^n(I:_Rx_t)+0:_Rx_t.$$
\end{lemma}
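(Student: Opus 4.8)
The goal is to prove the identity
$$I^{n+1}:_Rx_t=I^n(I:_Rx_t)+0:_Rx_t,$$
where $I=(x_{i+1},\ldots,x_j)$ and $t$ is an index outside the block $\{i+1,\ldots,j\}$. The plan is to exploit the $d$-sequence structure guaranteed by Proposition \ref{32}. Since $x_1,\ldots,x_d$ is an almost p-standard system of parameters, any reordering placing $x_t$ appropriately relative to the generators of $I$ still yields d-sequence relations; the key fact I want is that $x_t$ together with $x_{i+1},\ldots,x_j$ forms a (strong) d-sequence on $R$ in a suitable order, so that colon ideals against $x_t$ can be controlled modulo powers of $I$.

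The inclusion $\supseteq$ is the routine direction: clearly $0:_Rx_t\subseteq I^{n+1}:_Rx_t$, and if $y\in I:_Rx_t$ then $yx_t\in I$, so $(I^ny)x_t=I^n(yx_t)\subseteq I^{n+1}$, giving $I^n(I:_Rx_t)\subseteq I^{n+1}:_Rx_t$. So the content lies entirely in the reverse inclusion $\subseteq$. First I would try to reduce to small $n$ by induction: assuming the statement for $n-1$, take $y\in I^{n+1}:_Rx_t$, so $yx_t\in I^{n+1}\subseteq I^n$, whence $y\in I^n:_Rx_t$ and by the inductive hypothesis $y=u+w$ with $u\in I^{n-1}(I:_Rx_t)$ and $w\in 0:_Rx_t$. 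Then $ux_t\in I^{n+1}$ with $u$ an $R$-combination of products $z\cdot m$ where $z\in I:_Rx_t$ and $m\in I^{n-1}$; the task becomes showing such $u$ lies in $I^n(I:_Rx_t)$. This should follow from the base case $n=1$, i.e. the equality $I^2:_Rx_t=I(I:_Rx_t)+0:_Rx_t$, combined with the d-sequence property, so I would concentrate effort there.

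For the base case I would use the strong d-sequence property of $x_{i+1},\ldots,x_j$ relative to $x_t$. The essential algebraic input is a colon-capturing identity of the form $(x_{i+1},\ldots,x_j)^k:_Rx_t$ being generated by lower-power colons, which is exactly the kind of statement that d-sequences are designed to produce: for a d-sequence, the associated graded and Rees-type relations force $I^{n+1}:_Rx_t$ to collapse onto $I^n(I:_Rx_t)$ modulo annihilators. Concretely, given $yx_t\in I^2$, I would write $yx_t=\sum_{k,l}c_{kl}x_{i+k}x_{i+l}$ and use the d-sequence relations $(x_{i+1},\ldots,x_{m-1}):x_mx_t=(x_{i+1},\ldots,x_{m-1}):x_t$ repeatedly to peel off one factor at a time, expressing $y$ as an element of $I(I:_Rx_t)$ up to something killed by $x_t$.

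The main obstacle I anticipate is the bookkeeping in this peeling argument: the d-sequence conditions are stated for specific orderings and initial segments, and to apply them I must verify that the relevant reordering of $x_1,\ldots,x_d$ (moving $x_t$ next to the block $x_{i+1},\ldots,x_j$) remains a d-sequence on the appropriate quotient, which is where Proposition \ref{32} and the permutability of almost p-standard systems of parameters must be invoked carefully. A cleaner alternative I would consider is to pass to the quotient $\bar R=R/(0:_Rx_t)$, on which $x_t$ becomes a nonzerodivisor, reducing the claim to $\bar I^{n+1}:_{\bar R}x_t=\bar I^n(\bar I:_{\bar R}x_t)$; the regularity of $x_t$ on $\bar R$ together with the inherited d-sequence structure should make the induction go through transparently, at the cost of checking that the almost p-standard property descends to $\bar R$ in the form needed.
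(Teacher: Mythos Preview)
Your induction on $n$ does not close. After writing $y=u+w$ with $u\in I^{n-1}(I:_Rx_t)$ and $ux_t\in I^{n+1}$, you assert that ``this should follow from the base case $n=1$ combined with the d-sequence property,'' but no such reduction is available: the condition $u\in I^{n-1}(I:_Rx_t)$ with $ux_t\in I^{n+1}$ is not an instance of the lemma for a smaller exponent, nor does the base case $I^2:_Rx_t=I(I:_Rx_t)+0:_Rx_t$ apply to it. You are essentially trying to prove $\bigl(I^{n+1}:_Rx_t\bigr)\cap I^{n-1}(I:_Rx_t)\subseteq I^n(I:_Rx_t)+0:_Rx_t$, which is the same difficulty as the original statement. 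Likewise, passing to $\bar R=R/(0:_Rx_t)$ does remove the $0:_Rx_t$ term but leaves the identity $\bar I^{\,n+1}:_{\bar R}x_t=\bar I^{\,n}(\bar I:_{\bar R}x_t)$ untouched; regularity of $x_t$ alone does not force this, and the almost p-standard property is not known to descend to $\bar R$ in a form you could use.

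The paper proceeds quite differently: it inducts on the number of generators $j-i$, not on $n$. One writes $I^{n+1}=x_{i+1}^2I^{n-1}+x_{i+1}J^n+J^{n+1}$ with $J=(x_{i+2},\ldots,x_j)$ and peels off the variable $x_{i+1}$, reducing eventually to $J^{n+1}:_Rx_t$ which is handled by the inductive hypothesis. The essential input that makes this work, and which your sketch omits entirely, is Proposition~\ref{33}: the sequence $x_1,\ldots,x_i,x_{j+1}^2,\ldots,x_d^2$ (with $x_t$ removed) is almost p-standard on the quotients $R/(x_t,J^n)$, so that colon identities such as $(x_t,J^n):x_{i+1}^2=(x_t,J^n):x_{i+1}$ are available at every stage. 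Without this descent result the d-sequence relations you invoke hold only for linear ideals $(x_{i+1},\ldots,x_m)$, not for powers $I^n$, and the peeling cannot be carried out. Your outline correctly identifies that some permutability of the almost p-standard property must be checked, but the actual mechanism---induction on generators together with Proposition~\ref{33}---is missing.
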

\begin{proof}
The conclusion is proved by induction on $j-i$. The conclusion is obvious if $n=0$ or $j-i=0$. Suppose $j-i>0$. It suffices to show that
$$I^{n+1}:x_t\subseteq I(I^n:x_t)+0:_Rx_t,$$
for all $n>0$. We have
$$I^{n+1}=x_{i+1}^2I^{n-1}+x_{i+1}J^n+J^{n+1},$$
where $J=(x_{i+2}, \ldots, x_j)$. Taking an element $a\in I^{n+1}:x_t$, we write
$$x_ta=x_{i+1}^2a_1+x_{i+1}a_2+a_3,$$
where $a_1\in I^{n-1}$, $a_2\in J^n$ and $a_3\in J^{n+1}$. Then
$$a_1\in (x_t, J^n):x_{i+1}^2.$$
Note that by \cite[Proposition 3.4]{NTCDTC1}, $x_1, \ldots, x_{t-1}, x_{t+1}, \ldots, x_d$ is an almost p-standard system of parameters of $M/x_tM$. Combining with Proposition \ref{33}, the sequence $x_1, \ldots, x_i, x_{i+1}, x_{j+1}^2, \ldots, x_d^2$ (with $x_t$ being removed) is an almost p-standard system of parameters of $R/(x_t, J^n)$. Then
$$a_1\in (x_t, J^n):x_{i+1}^2=(x_t, J^n):x_{i+1}.$$
We are able to write $x_{i+1}a_1=x_tb+b_2$, where $b_2\in J^n$. In particular, $b\in (x_{i+1}I^{n-1}+J^n):x_t= I^n:x_t$, since $a_1\in I^{n-1}$. Hence
$$x_t(a-x_{i+1}b)=x_{i+1}a_2^\prime +a_3,$$
where $a_2^\prime=a_2+b_2\in J^n$. Note that $x_{i+1}b\in I(I^n:x_t)$. So we only need to show that $a-x_{i+1}b\in I(I^n:x_t)+0:_Rx_t$, or in other words, from beginning we might assume without lost of generality that $a_1=0$, that is
$$x_ta=x_{i+1}a_2+a_3.$$

We have $a_2\in [(x_t, J^{n+1}):x_{i+1}]\cap J^n$. The induction assumption applying to the ring $R/x_tR$ and the ideal $J=(x_{i+2}, \ldots, x_j)$ yields
$$(x_t, J^{n+1}):x_{i+1}=J^n((x_t, J):x_{i+1})+x_tR:x_{i+1}.$$
Hence
$$a_2\in [(x_t, J^{n+1}):x_{i+1}]\cap J^n=J^n((x_t, J):x_{i+1})+(x_tR:x_{i+1})\cap J^n.$$
Write
$$a_2=\sum_k\lambda_ku_k+u,$$
for $\lambda_k\in J^n$, $u_k\in (x_t, J):x_{i+1}$ and $u\in (x_tR:x_{i+1})\cap J^n$. We have $x_{i+1}u_k=x_tv_k+w_k$ for some $w_k\in J$ and $v_k\in (x_{i+1}, J):x_t=I:x_t$. Thus
$$x_{i+1}a_2=x_t\sum_k\lambda_kv_k+\sum_k\lambda_kw_k+x_{i+1}u.$$
Put $v=\sum_k\lambda_kv_k\in J^n(I:x_t)$ and $w=\sum_k\lambda_kw_k\in J^{n+1}$. We obtain
$$x_ta=x_tv+x_{i+1}u+w+a_3.$$

Now, it is worth noting that $x_1, \ldots, x_{t-1}, x_{t+1}, \ldots, x_d$ is an almost p-standard system of parameters of $R/x_tR$ due to Proposition \ref{32} and \cite[Proposition 3.4]{NTCDTC1}. We obtain $(0:_{R/x_tR}x_{i+1})\cap J^n(R/x_tR)=0$ as being shown in \cite[Corollary 3.7]{NTCDTC3}, or equivalently,
$$(x_tR:x_{i+1})\cap (x_t, J^n)=x_tR.$$
It shows that
$$u\in  (x_tR:x_{i+1})\cap J^n= x_tR\cap J^n.$$
Let $u=x_tu^\prime$, then $u^\prime\in J^n:x_t$. So $x_{i+1}u=x_t.x_{i+1}u^\prime$ where
$$x_{i+1}u^\prime\in x_{i+1}(J^n:x_t)\subseteq I(I^n:x_t).$$
To sum up, we have $x_ta=x_{i+1}a_2+a_3=x_tv+x_tx_{i+1}u^\prime+(w+a_3)$. Then
$$a-v-x_{i+1}u^\prime\in J^{n+1}:x_t=J(J^n:x_t)+0:_Rx_t.$$
The last equality holds by applying the induction assumption to $J=(x_{i+2}, \ldots, x_j)$. Therefore $a\in I(I^n:x_t)+0:_Rx_t$ and
$$I^{n+1}:x_t\subseteq I(I^n:x_t)+0:_Rx_t.$$
\end{proof}

\begin{corollary}\label{36}
Keep the notations and assumption as in Lemma \ref{35}. Let $t=1$ if $i\geq 1$ or $t=j+1$ if $i=0$. For each $n\geq 0$, we have
$$\bigcup_{s>0}I^{n+1}:\fm^s=I^{n+1}:_Rx_t=I^n(I:_Rx_t)+0:_Rx_t.$$
\end{corollary}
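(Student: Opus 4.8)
The second equality $I^{n+1}:_R x_t = I^n(I:_R x_t) + 0:_R x_t$ is exactly the content of Lemma \ref{35}, so the whole task is the first equality. Writing $N = R/I^{n+1}$ and recalling (as noted in the introduction) that $\bigcup_{s>0}(I^{n+1}:_R\fm^s)/I^{n+1} = H^0_\fm(N)$, the first equality translates into the single statement
$$H^0_\fm(N) = 0:_N x_t,$$
that is, the largest finite length submodule of $N$ coincides with the kernel of multiplication by $x_t$. The plan is to pin this down through two facts about how $x_t$ acts on $N$: that $0:_N x_t$ already has finite length, and that the colon stabilises already at the first power, $0:_N x_t = 0:_N x_t^2$.

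For the finite length I would feed $N$ into Proposition \ref{33} with $M=R$: it says that $x_1,\dots,x_i,x_{j+1}^2,\dots,x_d^2$ is an almost p-standard system of parameters of $N$, and such systems are filter-regular sequences, being strong d-sequences by Proposition \ref{32}. Hence the first member of this system, which is $x_t=x_1$ when $i\ge1$ and is $x_t^2=x_{j+1}^2$ when $i=0$, has annihilator of finite length on $N$. Since $0:_N x_t\subseteq 0:_N x_t^2$, in either case $0:_N x_t$ is of finite length, so $0:_N x_t\subseteq H^0_\fm(N)$; equivalently $I^{n+1}:_R x_t\subseteq\bigcup_{s>0}I^{n+1}:_R\fm^s$.

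The reverse inclusion is where the real content lies, and it is what forces the colon stabilisation. Once I know $0:_N x_t=0:_N x_t^2$, an elementary induction gives $0:_N x_t=0:_N x_t^s$ for every $s\ge1$, so $0:_N x_t$ equals the full $x_t$-torsion $\bigcup_{s>0}0:_N x_t^s$. Because $x_t\in\fm$, any element killed by a power of $\fm$ is killed by a power of $x_t$, whence $H^0_\fm(N)\subseteq\bigcup_{s>0} 0:_N x_t^s=0:_N x_t$; together with the finite-length inclusion of the previous paragraph this forces $H^0_\fm(N)=0:_N x_t$, which is precisely the identity sought. So the entire statement reduces to the colon-stabilisation $0:_N x_t=0:_N x_t^2$.

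This stabilisation is the main obstacle, and it behaves differently in the two cases. When $i\ge1$ we have $x_t=x_1$, the first member of the almost p-standard system of $N$ furnished by Proposition \ref{33}, and the strong d-sequence property of a first member yields $0:_N x_1=0:_N x_1^2$ for free. When $i=0$ and $x_t=x_{j+1}$, however, Proposition \ref{33} only delivers the stabilisation for the square $x_{j+1}^2$, not for $x_{j+1}$ itself, and bridging this first-power gap is the delicate point. I would close it by rerunning the d-sequence colon calculus of Lemma \ref{35} for $x_{j+1}^2$ and comparing it with Lemma \ref{35} for $x_{j+1}$, thereby reducing $I^{n+1}:_R x_{j+1}=I^{n+1}:_R x_{j+1}^2$ to the first-power identities $0:_R x_{j+1}=0:_R x_{j+1}^2$ and $I:_R x_{j+1}=I:_R x_{j+1}^2$ for the plain ideal; these in turn should follow from the strong d-sequence relations of $x_1,\dots,x_d$ together with the colon-intersection vanishing of \cite[Corollary 3.7]{NTCDTC3} already invoked in Lemma \ref{35}. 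I expect this last bookkeeping in the case $i=0$ to be the genuinely technical step, while everything else in the argument is formal.
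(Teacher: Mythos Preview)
Your proposal is correct and follows essentially the same path as the paper. Both arguments invoke Proposition~\ref{33} to see that $x_1,\dots,x_i,x_{j+1}^2,\dots,x_d^2$ is an almost p-standard (hence strong d-) sequence on $R/I^{n+1}$, which immediately yields $\bigcup_{s>0}I^{n+1}:\fm^s = I^{n+1}:_R x_1$ when $i\ge1$ and $=I^{n+1}:_R x_{j+1}^2$ when $i=0$; and both then dispose of the $i=0$ case by applying Lemma~\ref{35} once with $x_{j+1}$ and once with $x_{j+1}^2$ (the latter is legitimate since squaring one entry of an almost p-standard system preserves the property) to reduce to $I:_R x_{j+1}=I:_R x_{j+1}^2$ and $0:_R x_{j+1}=0:_R x_{j+1}^2$. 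The only remark is that you overestimate the difficulty of this last step: the two first-power identities are immediate from the d-sequence axiom for $x_1,\dots,x_d$ on $R$ (Proposition~\ref{32} with $i=d$), and no appeal to \cite[Corollary 3.7]{NTCDTC3} is needed.
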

\begin{proof}
By Proposition \ref{33}, $x_1, \ldots, x_i, x_{j+1}^2, \ldots, x_d^2$ is an almost p-standard system of parameters of $R/I^{n+1}$. Particularly it is a strong d-sequence on $R/I^{n+1}$. Hence 

$$
\bigcup_{s>0}I^{n+1}:\fm^s=
\begin{cases}
I^{n+1}:_Rx_1&\mbox{ if } i\geq 1;\\
I^{n+1}:_Rx_{j+1}^2&\mbox{ if } i=0.
\end{cases}
$$
In the second case, following Lemma \ref{35}, we have 
\[\begin{aligned}
I^{n+1}:_Rx_{j+1}^2
&=I^n(I:_Rx_{j+1}^2)+0:_Rx_{j+1}^2\\
&=I^n(I:_Rx_{j+1})+0:_Rx_{j+1}\\
&=I^{n+1}:_Rx_{j+1}.
\end{aligned}\]
\end{proof}

Let $x_1, \ldots, x_d$ be an almost p-standard system of parameters of $R$ and $I=(x_{i+1}, \ldots, x_j)$ for $0\leq i<j\leq d$.  Let $t=1$ if $i\geq 1$ and $t=j+1$ if $i=0$. Denote $I_n=I^n:x_t$. We assume in addition that $R$ is {\it unmixed}, that is, $\dim R/\fp=\dim R$ for all associated prime ideals $\fp$ of $R$. In particular, $x_t$ is a regular element of $R$. Hence $I_1^2=I_2$ and $I$ is a reduction of $I_1$. Furthermore, from Lemma \ref{35} we get that $I_nI_m=I_{n+m}$ for all $n, m>0$. This observation together with Corollary \ref{36} show that for all $n\geq 0$, $I_n=I_1^n$ and
$$h^0_I(n)=\ell_R(I_1^{n+1}/I^{n+1}).$$
Therefore $h^0_I(n)$ is the Rees function of the pair $(I, I_1)$ in the sense of \cite{HPV}. Combining with the main theorem of Amao in \cite{AMAO} and Herzog-Puthenpurakal-Verma \cite[Corollary 4.7]{HPV}, we obtain the main theorem of this section.

\begin{theorem}\label{37}
Let $R$ be a quotient of a Cohen-Macaulay local ring. Suppose $R$ is unmixed. Let $x_1, \ldots, x_d$ be an almost p-standard system of parameters of $R$. For $0\leq i<j\leq d$, let $I$ be the ideal generated by $x_{i+1}, \ldots, x_j$. Then there is a polynomial $P_I(n)$ such that $h^0_I(n)=P_I(n)$ for all $n\gg 0$. Moreover, $\deg(P_I(n))+1$ is equal to the dimension of the graded module $\bigoplus_{n=1}^\infty I_1^n/I^n$ over the Rees algebra $\mathcal R(I)$.
\end{theorem}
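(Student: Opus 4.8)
The plan is to recognize $h^0_I$ as the Rees function of a suitable pair of ideals and then to read off both polynomiality and the degree from the Hilbert theory of a single finitely generated graded module over the Rees algebra $\mathcal{R}(I)$.

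First I would assemble the identities established above into the key formula $h^0_I(n) = \ell_R(I_1^{n+1}/I^{n+1})$. By Corollary \ref{36}, $H^0_\fm(R/I^{n+1}) = (I^{n+1}:_R x_t)/I^{n+1} = I_{n+1}/I^{n+1}$. Since $R$ is unmixed, $x_t$ is a nonzerodivisor, so $0:_R x_t = 0$, and Lemma \ref{35} collapses to $I_{n+1} = I^n(I:_R x_t) = I^n I_1$; an easy induction then gives $I_n = I_1^n$ together with $I_1^2 = I I_1$. The last equality shows that $I$ is a reduction of $I_1$ with reduction number at most one, while $\ell_R(I_1/I) = h^0_I(0)$ is finite. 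Thus $h^0_I(n) = \ell_R(I_1^{n+1}/I^{n+1})$ is exactly the Rees function of the pair $(I, I_1)$, and Amao's theorem \cite{AMAO} produces a polynomial $P_I(n)$ with $h^0_I(n) = P_I(n)$ for all $n \gg 0$.

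For the degree statement I would organize the data as the graded module $N = \bigoplus_{n \geq 1} I_1^n/I^n$ over $\mathcal{R}(I)$. Because $I$ is a reduction of $I_1$, the Rees algebra $\mathcal{R}(I_1)$ is module-finite over $\mathcal{R}(I)$, hence its quotient $N = \mathcal{R}(I_1)/\mathcal{R}(I)$ is a finitely generated graded $\mathcal{R}(I)$-module whose degree-$n$ piece $I_1^n/I^n$ has finite length $h^0_I(n-1)$. By the Hilbert--Serre theorem for finitely generated graded modules over a standard graded ring, which is precisely \cite[Corollary 4.7]{HPV}, the length of the graded pieces of $N$ agrees for large $n$ with a polynomial of degree $\dim_{\mathcal{R}(I)} N - 1$. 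Matching this against $P_I$ yields $\deg(P_I(n)) + 1 = \dim_{\mathcal{R}(I)} N$, as required.

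The genuine analytic difficulty, namely the saturation computation $\bigcup_s (I^{n+1}:_R \fm^s) = I^{n+1}:_R x_t$ together with the recursion $I^{n+1}:_R x_t = I^n(I:_R x_t) + (0:_R x_t)$, has already been carried out in Corollary \ref{36} and Lemma \ref{35}. The only remaining obstacle, and where I would be most careful, is the bookkeeping that certifies that $N$ is a finitely generated $\mathcal{R}(I)$-module and that its Hilbert function is exactly the shifted Rees function; this rests squarely on the reduction property $I_1^2 = I I_1$, itself a consequence of unmixedness combined with Lemma \ref{35}. Once that finiteness is in hand, both conclusions are formal consequences of standard graded Hilbert--Serre theory.
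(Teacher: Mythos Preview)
Your proposal is correct and follows essentially the same route as the paper: use unmixedness to make $x_t$ regular, combine Lemma~\ref{35} and Corollary~\ref{36} to rewrite $h^0_I(n)=\ell_R(I_1^{n+1}/I^{n+1})$ with $I_1=I:_R x_t$ a finite extension of $I$ having $I$ as a reduction, then invoke Amao \cite{AMAO} for polynomiality and \cite[Corollary~4.7]{HPV} for the degree. The only place to be slightly more explicit is the equality $I_1^2=I I_1$ (equivalently $I_1^2=I_2$): it does not fall out of $I_{n+1}=I^n I_1$ alone but uses additionally that $I^2:_R x_t^2=I^2:_R x_t$, which follows from the d-sequence property supplied by Proposition~\ref{33} (and is handled in Corollary~\ref{36} for the case $i=0$).
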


\begin{remark}  Let $x_1, \ldots, x_d$ be an almost p-standard system of parameters of $R$. For $0\leq i<j\leq d$, let $I$ be the ideal generated by $x_{i+1}, \ldots, x_j$. Suppose $\mathrm{Ass} R \subseteq \mathrm{Assh}\,R \cup \{\frak m\}$, where $\mathrm{Assh}\,R = \{\frak p \in \mathrm{Ass}\,R \,:\, \dim R/\frak p =\dim R\}$ (e.g. $R$ is unmixed). Then we have $0: x_t = H^0_{\frak m}(R)$ for all $t \ge 1$. Set $R_1 = R/H^0_{\frak m}(R)$ and $I_1 = IR_1$. Notice that the image of $x_1, \ldots, x_d$ in $R_1$ is an almost p-standard system of parameters of $R_1$. Since $R_1$ is unmixed and $h^0_{I,R}(n) = h^0_{I_1,R_1}(n) + \ell(H^0_{\frak m}(R))$
for all $n \ge 1$, we have $h^0_{I,R}(n)$ is of polynomial type. We conjecture that $h^0_{I,R}(n)$ is always of polynomial type for general rings. However the proof seems to be complicated.
\end{remark}

As we have discussed before, typical examples of almost p-standard systems of parameters are standard systems of parameters. The theorem particularly asserts that if $I$ is an ideal in a generalized Cohen-Macaulay local ring which is generated by part of a standard system of parameters, then the function $h^0_I(n)$ is of polynomial type. We even have more, the polynomial can be computed in terms of length of certain local cohomology modules of the ring. This is done in the next part of this section. For the results on generalized Cohen-Macaulay local rings, we refer to \cite{NVT2}.

From Lemma 3.5, we obtain the following well known property of standard system of parameters.

\begin{corollary}[\cite{NVT2}, Corollary 2.6 (v)] \label{38}
Let $R$ be a generalized Cohen-Macaulay local ring. Let $x_1, \ldots, x_d$ be a standard system of parameters of $R$. Denote $I=(x_1, \ldots, x_i)$ for some $0\leq i\leq d$. Then $$I^{n+1}:_Rx_1=I^n+0:_Rx_1,$$ for all $n\geq 0$.
\end{corollary}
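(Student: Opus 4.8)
The plan is to peel the generator $x_1$ off the ideal and reduce the whole computation to a single application of Lemma \ref{35}. Since $R$ is generalized Cohen-Macaulay, a standard system of parameters is almost p-standard, so Lemma \ref{35} is available for $x_1,\ldots,x_d$. I would set $J=(x_2,\ldots,x_i)$, so that $I=x_1R+J$ and therefore $I^{n+1}=x_1I^n+J^{n+1}$ for all $n\geq 0$. The degenerate cases $i\in\{0,1\}$, where $J=0$, are handled directly: for $i=1$ the argument below collapses to the asserted formula at once.

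First I would establish the elementary decomposition
$$I^{n+1}:_Rx_1=I^n+(J^{n+1}:_Rx_1).$$
The inclusion $\supseteq$ is immediate from $x_1I^n\subseteq I^{n+1}$ and $x_1(J^{n+1}:_Rx_1)\subseteq J^{n+1}\subseteq I^{n+1}$. For $\subseteq$, an element $a$ with $x_1a\in I^{n+1}=x_1I^n+J^{n+1}$ can be written $x_1a=x_1p+q$ with $p\in I^n$ and $q\in J^{n+1}$, whence $a-p\in J^{n+1}:_Rx_1$. Next I would apply Lemma \ref{35} to the ideal $J$ with $t=1$: in the notation of that lemma this is the choice of lower index $1$ and upper index $i$ (legitimate once $i\geq 2$), and $t=1$ lies in the permitted index set $\{1\}\cup\{i+1,\ldots,d\}$ precisely because $x_1\notin J$. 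This gives
$$J^{n+1}:_Rx_1=J^n(J:_Rx_1)+(0:_Rx_1).$$

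The last step is the observation that makes everything collapse: since $J^n$ is an ideal, $J^n(J:_Rx_1)\subseteq J^n\subseteq I^n$, so this term is simply absorbed into $I^n$. Substituting the second display into the first then yields $I^{n+1}:_Rx_1=I^n+(0:_Rx_1)$, as required. What looks like the main obstacle---computing the colon ideal $J:_Rx_1$ in a ring that is only generalized Cohen-Macaulay---never actually has to be confronted, because $J^n$ absorbs whatever ideal is multiplied into it; all of the genuine content is hidden inside Lemma \ref{35}. The only real care needed is the index bookkeeping required to invoke Lemma \ref{35} correctly and the separate, immediate treatment of the small values $i=0,1$.
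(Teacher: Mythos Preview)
Your proposal is correct and follows essentially the same approach as the paper: set $J=(x_2,\ldots,x_i)$, use $I^{n+1}=x_1I^n+J^{n+1}$ to reduce the colon $I^{n+1}:_Rx_1$ to $I^n+(J^{n+1}:_Rx_1)$, apply Lemma~\ref{35} to $J$ with $t=1$, and absorb $J^n(J:_Rx_1)\subseteq J^n\subseteq I^n$. The paper's proof is terser but identical in substance.
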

\begin{proof} It suffices to show that $I^{n+1}:_Rx_1\subseteq I^n + 0:_R x_1$.

We have $I^{n+1}=x_1I^n+J^{n+1}$ where $J=(x_2, \ldots, x_i)$. Let $a\in R$ such that $ax_1\in I^{n+1}$, hence $ax_1=x_1b+c$ for some $b\in I^n$, $c\in J^{n+1}$. We have $a-b\in J^{n+1}:x_1$. Lemma \ref{35} applies and we have
$$a-b\in J^n+0:_Rx_1.$$
So $a\in I^n+0:_Rx_1$.
\end{proof}
The case $i = d$ of the following is a well-known result (see \cite[Theorem 4.1]{NVT2})
\begin{theorem}\label{39}
Let $R$ be a generalized Cohen-Macaulay local ring and let $I$ be an ideal of $R$ generated by $i$ elements, $i < d$, in a standard system of parameters of $R$. Then
$$h_I^0(n)=h^0(R)+\sum_{t=0}^{i-1}\Big(\sum_{j=0}^t\binom{t}jh^{j+1}(R)\Big)\binom{n+t}{t},$$
where $h^j(R)$ is the length of the $j$-th local cohomology module $H^j_\fm(R)$. In particular, $h_I^0(n)=0$ if $i<\mathrm{depth}(R)$ and $\deg(h_I^0(n))=i-1$ if $\mathrm{depth}(R) \le i < d$.
\end{theorem}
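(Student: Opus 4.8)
The plan is to prove the displayed formula by induction on the number $i$ of generators, after reducing to the case of positive depth; the two ``in particular'' assertions then follow by inspection. Since a generalized Cohen--Macaulay ring satisfies $\Ass R\subseteq \Assh R\cup\{\fm\}$, the Remark following Theorem \ref{37} gives $h^0_{I,R}(n)=h^0_{I_1,R_1}(n)+\ell(H^0_\fm(R))$ with $R_1=R/H^0_\fm(R)$ and $I_1=IR_1$. Here $R_1$ is again generalized Cohen--Macaulay of the same dimension, the images of $x_1,\dots,x_d$ form a standard system of parameters, $H^0_\fm(R_1)=0$, and $H^j_\fm(R_1)\cong H^j_\fm(R)$ for $j\ge1$, so $h^{j+1}(R_1)=h^{j+1}(R)$. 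As $\ell(H^0_\fm(R))=h^0(R)$, it suffices to prove the formula for $R_1$ and add $h^0(R)$; thus I assume $H^0_\fm(R)=0$, i.e. $x_1$ is a nonzerodivisor, and must show $h^0_I(n)=\sum_{t=0}^{i-1}\big(\sum_{j=0}^t\binom tj h^{j+1}(R)\big)\binom{n+t}t$. For $i=1$ this is immediate: with $J=(x_1):_Rx_2$, Corollary \ref{36} identifies $H^0_\fm(R/x_1^{n+1})$ with $x_1^nJ/x_1^{n+1}$ (using $0:_Rx_2\subseteq H^0_\fm(R)=0$); since $x_1$ is regular and $(x_1)\subseteq J$, this is $J/(x_1)=H^0_\fm(R/x_1R)$, so $h^0_{(x_1)}(n)=h^0(R/x_1R)=h^1(R)$ is constant, as predicted.

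For the inductive step put $R'=R/x_1R$ and $I'=(x_2,\dots,x_i)R'$; by Corollary \ref{34}, $R'$ is generalized Cohen--Macaulay of dimension $d-1$ with standard system of parameters $x_2,\dots,x_d$, and $I'$ is generated by $i-1$ of them. Because $x_1$ is regular and $I^{n+1}:_Rx_1=I^n$ (Corollary \ref{38}, as $0:_Rx_1=0$), there is for each $n$ a short exact sequence
\[
0\longrightarrow R/I^n\xrightarrow{\ \cdot x_1\ } R/I^{n+1}\longrightarrow R'/I'^{\,n+1}\longrightarrow 0.
\]
Two facts feed the induction. First, applying $H^\bullet_\fm(-)$ to $0\to R\xrightarrow{x_1}R\to R'\to 0$ together with the standardness relation $x_1H^j_\fm(R)=0$ for $j<d$ yields the local-cohomology recursion $h^j(R')=h^j(R)+h^{j+1}(R)$ for $0\le j<d-1$. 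Second, Pascal's rule converts this into the coefficient identity $\sum_{j=0}^s\binom sj h^{j+1}(R')=\sum_{j=0}^{s+1}\binom{s+1}j h^{j+1}(R)$, which says precisely that the degree-$(s{+}1)$ coefficient of the target formula for $R$ equals the degree-$s$ coefficient of the inductively known formula for $R'$.

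Taking $H^\bullet_\fm(-)$ of the short exact sequence and using that $\cdot x_1$ is injective on $H^0_\fm(R/I^n)$ (an $\fm$-torsion class $\bar r$ with $x_1r\in I^{n+1}$ lies in $I^{n+1}:_Rx_1=I^n$), one reads off
\[
h^0_{I,R}(n)=h^0_{I,R}(n-1)+h^0_{I',R'}(n)-\varepsilon_n,\qquad \varepsilon_n:=\ell\,\Ker\!\big(\cdot x_1\colon H^1_\fm(R/I^n)\to H^1_\fm(R/I^{n+1})\big).
\]
Granting the induction hypothesis for $R'$ and the coefficient identity, this difference equation integrates to the claimed polynomial provided $\varepsilon_n=h^1(R)=h^0(R')$ for $n\gg0$; the constant of summation is fixed by $h^0_{I,R}(0)=\ell(H^0_\fm(R/I))=\sum_{k\ge1}\binom ik h^k(R)=\sum_{t=0}^{i-1}\big(\sum_{j=0}^t\binom tj h^{j+1}(R)\big)$, the last step a hockey-stick summation. \textbf{The main obstacle is exactly the claim $\varepsilon_n=h^1(R)$}: one must control multiplication by $x_1$ on the first local cohomology of $R/I^n$ as $n$ grows. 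I would attack this by viewing $\bigoplus_n H^j_\fm(R/I^{n+1})$ as graded modules over the Rees algebra $\cR(I)$ — the same finiteness that makes $h^0_I(n)$ polynomial in Theorem \ref{37} through Amao's theorem — so that $\varepsilon_n$ is eventually polynomial, and then running a simultaneous induction on all the lengths $\ell(H^k_\fm(R/I^{n+1}))$ (each $R/I^{n+1}$ being generalized Cohen--Macaulay by Corollary \ref{34}) to pin the constant value to $h^1(R)$.

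Finally, both ``in particular'' consequences are read off from the closed formula. Its coefficients involve only $h^1(R),\dots,h^i(R)$, all of which vanish when $i<\depth R$, giving $h^0_I(n)=0$; and once $1\le\depth R\le i$ the top coefficient $\sum_{j=0}^{i-1}\binom{i-1}j h^{j+1}(R)$ acquires the strictly positive contribution $\binom{i-1}{\depth R-1}h^{\depth R}(R)$, so $\deg h^0_I(n)=i-1$.
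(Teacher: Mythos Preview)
Your overall architecture---reduce to positive depth, use the short exact sequence
\[
0\longrightarrow R/I^n\xrightarrow{\ \cdot x_1\ } R/I^{n+1}\longrightarrow R'/I'^{\,n+1}\longrightarrow 0,
\]
and induct on $i$---matches the paper's. The gap is precisely the one you flag: you do not prove $\varepsilon_n=\ell\,\Ker\big(\cdot x_1\colon H^1_\fm(R/I^n)\to H^1_\fm(R/I^{n+1})\big)=h^1(R)$, and the Rees-algebra/simultaneous-induction sketch you offer does not close it. Finiteness over $\mathcal R(I)$ would at best make $\varepsilon_n$ eventually polynomial; it gives no handle on the value, and nothing in your argument pins it to $h^1(R)$ rather than some other constant (or nonconstant) function. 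Moreover the theorem asserts the formula for all $n\ge0$, so ``eventually'' is not enough without a separate argument for small $n$.

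The paper sidesteps $H^1_\fm(R/I^n)$ entirely. Instead of applying $H^\bullet_\fm(-)$ to the short exact sequence, it applies the snake lemma to the diagram obtained by multiplying the sequence by $x_{i+1}$, the next parameter \emph{outside} $I$. Because $x_1,\dots,x_d$ is standard, Corollary~\ref{36} identifies each $H^0_\fm(R/I^{n+1})$ with the kernel $(I^{n+1}:x_{i+1})/I^{n+1}$ of that multiplication; the snake lemma then yields a four-term exact sequence of \emph{finite-length} modules whose last term is
\[
\Ker\!\big(R/(x_{i+1}R+I^n+0{:}x_1)\xrightarrow{\ \cdot x_1\ }R/(x_{i+1}R+I^{n+1})\big).
\]
Using Corollary~\ref{38} for $R/x_{i+1}R$ one computes this kernel to be $(x_{i+1}R{:}x_1)/(x_{i+1}R+0{:}x_1)\cong H^0_\fm\big(R/(x_{i+1}R+0{:}x_1)\big)$, whose length is $h^0(R/x_{i+1}R)-h^0(R)=h^1(R)$, valid for every $n\ge1$. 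This is exactly your missing $\varepsilon_n$, obtained without ever touching $H^1_\fm(R/I^n)$. With this in hand the difference equation integrates to the stated closed form for all $n\ge0$, and your reading of the ``in particular'' clauses is correct.
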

\begin{proof}
Let $x_1, \ldots, x_d$ be a standard system of parameters of $R$ so that $x_1, \ldots, x_i$ generate $I$. Denote $R_1=R/x_1R$ and $I_1=(x_2, \ldots, x_i)R_1=IR_1$. The ring $R_1$ is again a generalized Cohen-Macaulay module. Recall that we denote $h^j(R)=\ell_R(H^j_\fm(R))$ for $j\not=\dim R$ and $h^0_{I, R}(n):=h^0_I(n)=\ell(H^0_\fm(R/I^{n+1}))$. We first prove the following claim which is necessary for induction process later.
\medskip

\noindent {\bf Claim.} $h_{I, R}^0(n)-h_{I, R}^0(n-1)=h_{I_1, R_1}^0(n)-h^0(R)-h^1(R),$ for all $n\geq 1$.
\medskip

In oder to prove the claim, we note by using Corollary \ref{38} that the map $R/I^{n+1}\stackrel{x_1}{\longrightarrow}R/I^{n+1}$ induces the short exact sequence
$$0\longrightarrow R/(I^{n}+0:x_1) \stackrel{x_1}{\longrightarrow}R/I^{n+1}\longrightarrow R_1/I_1^{n+1}\longrightarrow 0.$$
Then the commutative diagram
\begin{displaymath}
\xymatrix{
0\ar[r]&R/(I^n+0:x_1)  \ar[r]^{x_1} \ar[d]_{x_{i+1}} &
R/I^{n+1}\ar[d]^{x_{i+1}}\ar[r]&R_1/I_1^{n+1}\ar[d]^{x_{i+1}}\ar[r]&0\\ 0\ar[r]& R/(I^n+0:x_1) \ar[r]^{x_1} & R/I^{n+1}\ar[r]&R_1/I_1^{n+1}\ar[r]&0}
\end{displaymath}
gives rise to an exact sequence
\begin{multline*}
0\longrightarrow  ((I^n+0:x_1):x_{i+1})/(I^n+0:x_1)\longrightarrow  (I^{n+1}:x_{i+1})/I^{n+1}\\
\longrightarrow (I_1^{n+1}R_1:x_{i+1})/ I_1^{n+1}
\longrightarrow \Ker(f)\longrightarrow  0,
\end{multline*}
where $f$ is the multiplication map
$$R/(x_{i+1}R+ I^n+0:x_1)\stackrel{x_1}{\longrightarrow}R/(x_{i+1}R+I^{n+1}).$$
We have
$$\ell((I^{n+1}:x_{i+1})/I^{n+1})=\ell(H^0_\fm(R/I^{n+1})),$$
$$\ell((I_1^{n+1}R_1:x_{i+1})/I_1^{n+1})=\ell(H^0_\fm(R_1/I_1^{n+1})),$$
\[\begin{aligned}
\ell(((I^n+0:x_1):x_{i+1})/(I^n+0:x_1))
&=\ell(H^0_\fm(R/I^n+0:x_1))\\
&=\ell(H^0_\fm(R/I^n))-\ell(H^0_\fm(R)),
\end{aligned}\]
where $H^0_\fm(R)=0:_Rx_1$. Moreover, $(x_{i+1}R+I^{n+1}):x_1=I^n+x_{i+1}R:x_1$ by applying Corollary \ref{38} to the ring $R/x_{i+1}R$ and the sequence $x_1, \ldots, x_i$. Hence
\[\begin{aligned}
\Ker(f)&=((x_{i+1}R+I^{n+1}):x_1)/(x_{i+1}R+I^n+0:_Rx_1)\\
&\simeq (I^n+x_{i+1}R:x_1)/(I^n+x_{i+1}R+0:_Rx_1)\\
&\simeq x_{i+1}R:x_1/(x_{i+1}R+0:_Rx_1)\\
&\simeq H^0_\fm(R/(x_{i+1}R+0:_Rx_1)).
\end{aligned}\]
It implies that $\ell(\Ker(f))=\ell(H^0_\fm(R/x_{i+1}R))-\ell(H^0_\fm(R))=\ell(H^1_\fm(R))$. So the claim is proved.
\medskip

The theorem is now proved by induction on $i$. If $i=1$, we have from the claim $h_I^0(n)-h_I^0(n-1)=h^0(R/x_1R)-h^1(R)-h^0(R)=0$, hence
$$h_I^0(n)=h^0_I(n-1)=h^0_I(0)=h^0(R)+h^1(R).$$
Assume $i>1$. Combining the induction assumption with the claim, we have
\[\begin{aligned}
h_I^0(n)-h_I^0(n-1)=&h^0(R/x_1R)-h^0(R)-h^1(R)\\
&+\sum_{t=0}^{i-2}\Big(\sum_{j=0}^t\binom{t}jh^{j+1}(R/x_1R)\Big)\binom{n+t}{t}.\end{aligned}\]
For the generalized Cohen-Macaulay ring $R$, we have $h^j(R/x_1R)=h^j(R)+h^{j+1}(R)$. A simple computation then gives us
$$h_I^0(n)=h^0(R)+\sum_{t=0}^{i-1}\Big(\sum_{j=0}^t\binom{t}jh^{j+1}(R)\Big)\binom{n+t}{t}.$$
\end{proof}

In a more general case when $I$ is generated by part of a system of parameters which is not almost p-standards, we probably expect that the assertion of Theorem 3.7 still holds true for I. Surprisingly, it is not the case. We end with the following example.

\begin{example}\label{310}
By Cutkosky, Ha, Srinivasan and Theodorescu \cite[Theorem 2.2]{CTST}, there exist a regular local ring $(A,\frak n)$ and an ideal $I$ such that $h^0_I(n)$ is not asymptotic to a polynomial. We choose a set of generators of $I$, say $a_1, ..., a_r$. Let $S = A[T_1, ..., T_r]_{(T_1,..., T_r)+\frak n}$, we have $\dim S = \dim A + r$. Let $J = (T_1+a_1, ..., T_r+a_r)$. It is easy to check that $S/J \cong A$. So $J$ is generated by part of a system of parameters of $S$. Since $S$ is regular, we have  $S/J^n$ is Cohen-Macaulay for all $n$.

Consider $A$ as an $S$-module we have $\mathrm{Ann}_SA = (T_1, ..., T_r)$, so $A/J^nA = A/I^n$. Let $\frak n^\prime = (T_1,..., T_r)+\frak nS$, then $H^0_{\frak n^\prime}(S/J^n) = 0$ and $H^0_{\frak n^\prime}(A/J^nA) \cong H^0_{\frak n}(A/I^n)$.

Take the idealization $R = S \ltimes A$ and denote $\frak q = ((T_1+a_1, 0), \ldots, (T_r+a_r, 0))$. The ideal $\frak q$ in the local ring $R$ is generated by part of a system of parameters and 
$$R/\frak q^n \simeq S/J^n \ltimes A/I^n,$$
for all $n$. Let $\fm$ be the maximal ideal of $R$ we have
$$\ell(H^0_{\fm}(R/\frak q^n)) = \ell(H^0_{\frak n}(A/I^n)),$$
which is not asymptotic to a polynomial in $n$.

Finally, it should be remarked that the ring $R$ is not a domain. It would be interesting to find a counterexample to be a domain.
\end{example}


\medskip
\noindent{\bf Acknowledgments.} The authors would like to thank the anonymous referees for their valuable comments and suggestions to improve the presentation of the paper. The first and the second authors are funded by Vietnam National Foundation for Science and Technology Development (NAFOSTED) under grant number 101.04-2015.26. This paper was written while the third author was visiting Vietnam Institute for Advanced Study in Mathematics. He would like to thank the VIASM for hospitality and financial support.



\begin{thebibliography}{99}
\bibitem{AMAO}
J. O. Amao, On a certain Hilbert polynomial. {\it J. London Math. Soc.} (2) {\bf 14} (1976), no. 1, 13--20.

\bibitem{BS}
M. Brodmann and R. Y. Sharp, {\it Local Cohomology: An Algebraic Introduction with Geometric Applications}, Cambridge University Press 1998.

\bibitem{BS1}
M. Brodmann and R. Y. Sharp, On the dimension and multiplicity of local cohomology modules. {\it Nagoya Math. J.} {\bf 167} (2002), 217--233.

\bibitem{DTCNam}
D. T. Cuong and P. H. Nam, Hilbert coefficients and partial Euler-Poincar\'e characteristics of Koszul complexes of d-sequences. {\it J. Algebra} {\bf 441} (2015), 125--158.

\bibitem{NTCDTC1} N. T. Cuong and D. T. Cuong, dd-Sequences and partial Euler-Poincar\'{e} characteristics of Koszul complex. {\it J. Algebra and Its Applications} {\bf 6}(2) (2007), 207--231.

\bibitem{NTCDTC3}
N. T. Cuong, D. T. Cuong, On sequentially Cohen-Macaulay modules. {\it Kodai Math. J.} {\bf 30} (2007) 409--428.

\bibitem{NTCDTC2} N. T. Cuong and D. T. Cuong, Local cohomology annihilators and Macaulayfication. {\it Acta Math. Vietnam.} {\bf 42} (2017), 37--60

\bibitem{NTCNhan} N. T. Cuong and L. T. Nhan, Dimension, multiplicity and Hilbert function of Artinian modules. {\it East-West J. Math.} {\bf 1}(2) (1999), 179--196.


\bibitem{CTST} S.D. Cutkosky, H\`a Huy T\`ai, H. Srinivasan, E. Theodorescu, Asymptotic behavior of the length of local cohomology. {\it Canad. J. Math.} {\bf 57}(6) (2005), 1178--1192.


\bibitem{HPV}
J. Herzog, T. J. Puthenpurakal, J. K. Verma, Hilbert polynomials and powers of ideals. {\it Math. Proc. Camb. Phil. Soc.} {\bf 145}(3) (2008), 623--642.

\bibitem{HU}
C. Huneke, Theory of d-sequences and powers of ideals. {\it Adv. in Math.} {\bf 46} (1982), 249--279.

\bibitem{Kaw} T. Kawasaki, On arithmetic Macaulayfication of Noetherian rings,   {\it Trans. Amer. Math. Soc.} {\bf 354} (2002),  123--149.

\bibitem{Kir} D. Kirby, Artinian modules and Hilbert polynomials. {\it Quart. J. Math. Oxford} {\bf 24}(2) (1973), 47--57.

\bibitem{KUV} S. Kleiman, B. Ulrich and J. Validashti, Multiplicities, integral dependence and equisingularity. Preprint.

\bibitem{LT} C. H. Linh and N. V. Trung, Uniform bounds in generalized Cohen-Macaulay rings. {\it Journal of Algebra} {\bf 304} (2) (2006), 1147--1159.

\bibitem{NVT1} N. V. Trung, Absolutely superficial sequences. {\it Math. Proc. Cambridge Philos. Soc.} {\bf 93} (1983), 35-47.

\bibitem{NVT2}
N. V. Trung, Toward a theory of generalized Cohen-Macaulay modules. {\it Nagoya Math. J.} {\bf 102} (1986), 1--49.

\bibitem{UV} B. Ulrich and J. Validashti, Numerical criteria for integral dependence. {\it  Math. Proc. Camb. Phil. Soc.} {\bf 151}(1) (2011), 95--102.
\end{thebibliography}
\end{document}